%------------------------------------------------------------------------------
% Beginning of journal.tex
%------------------------------------------------------------------------------
%
% AMS-LaTeX version 2 sample file for journals, based on amsart.cls.
%
%        ***     DO NOT USE THIS FILE AS A STARTER.      ***
%        ***  USE THE JOURNAL-SPECIFIC *.TEMPLATE FILE.  ***
%
% Replace amsart by the documentclass for the target journal, e.g., tran-l.

\documentclass{amsart}
\usepackage{amssymb}
\usepackage{amsmath}
\usepackage{kotex}
\usepackage{colortbl}

\newtheorem{theorem}{Theorem}[section]
\newtheorem{lemma}[theorem]{Lemma}

\theoremstyle{definition}
\newtheorem{definition}[theorem]{Definition}

\theoremstyle{remark}
\newtheorem{remark}[theorem]{Remark}

\numberwithin{equation}{section}

%    Absolute value notation

\newcommand{ \mint }{ {\int\hspace{-0.38cm}- }}

\newcommand{ \R }{ \mathbb{R} }

\newcommand{ \ba }{ \mathbf{a}}
\newcommand{ \loc }{ \mathrm{loc}}

%\usepackage[notcite, notref]{showkeys}

%    Blank box placeholder for figures (to avoid requiring any
%    particular graphics capabilities for printing this document).

\begin{document}

\title[]
{$L^q$-regularity for nonlinear elliptic equations with Schr\"odinger-type lower order terms}

%    Information for first author

\author{Mikyoung Lee}
  %  Address of record for the research reported here
  \address{Mikyoung Lee, Department of Mathematics, Pusan National University, Busan
46241, Republic of Korea}
%    Current address
\email{mikyounglee@pusan.ac.kr}

\author{Jihoon Ok}
 % Address of record for the research reported here
\address{Jihoon Ok, Department of Mathematics, Sogang University, Seoul 04107, Republic of Korea}
% Current address
\email{\texttt{jihoonok@sogang.ac.kr}}

%    \thanks will become a 1st page footnote.
\thanks{M. Lee was supported by the National Research Foundation of Korea (NRF) grant funded by the Korea Government (NRF-2019R1F1A1061295). J. Ok was supported by the National Research Foundation of Korea funded by the Korean
Government (NRF-2017R1C1B2010328).}

%    General info
\subjclass[2010]{Primary 35J10, 35J92; Secondary 35J25, 35B65}

%\date{\today}

%\dedicatory{This paper is dedicated to our advisors.}

\keywords{Schr\"odinger operator; $L^q$-estimates; nonnegative potential; $p$-Laplacian}

\begin{abstract} We consider nonlinear elliptic equations of the $p$-Laplacian type  with lower order terms which involve nonnegative potentials satisfying a reverse H\"older type condition. Then we obtain interior and boundary $L^q$ estimates for the gradient of weak solutions and the lower order terms, independently, under sharp regularity conditions on the coefficients and the boundaries. 
In particular, the proof in this paper does not employ Fefferman-Phong type inequalities which are essential tools in the linear cases in \cite{AB1,Sh1}.
\end{abstract}

\maketitle

%\section*{This is an unnumbered first-level section head}
%This is an example of an unnumbered first-level heading.

%% The correct journal style for \specialsection is all uppercase; a known bug
%% in amsart.cls prevents this, so input must be uppercase until it is fixed.
%\specialsection*{This is a Special Section Head}
%\specialsection*{THIS IS A SPECIAL SECTION HEAD}
%This is an example of a special section head%
%%%%%%%%%%%%%%%%%%%%%%%%%%%%%%%%%%%%%%%%%%%%%%%%%%%%%%%%%%%%%%%%%%%%%%%%
%\footnote{Here is an example of a footnote. Notice that this footnote
%text is running on so that it can stand as an example of how a footnote
%with separate paragraphs should be written.
%\par
%And here is the beginning of the second paragraph.}%
%%%%%%%%%%%%%%%%%%%%%%%%%%%%%%%%%%%%%%%%%%%%%%%%%%%%%%%%%%%%%%%%%%%%%%%%

\section{Introduction}\label{sec1}

In this paper we study $L^q$-regularity theory for the following nonlinear equations of the $p$-Laplacian type with lower order terms:
 \begin{equation}
\label{maineq}
\left\{\begin{array}{rclcc}
-\mathrm{div}\, \mathbf{a}(x,Du) + V |u|^{s-2}u&  =  & -\mathrm{div}\, (|F|^{p-2}F) & \textrm{ in } & \Omega,  \\
u & = & 0 & \textrm{ on } & \partial \Omega,
\end{array}\right.
\end{equation}
where $1<p<\infty$, $1 < s  < p^*$ (see \eqref{gamma*} with $\gamma=p$),
$\Omega$ is a bounded and open set in $\R^n$ with $n\geq2$,
$V:\R^n\to \R$ is nonnegative and called a \textit{potential}, and $F\in L^p(\Omega,\R^n)$. The nonlinearity $\mathbf{a} : \R^n \times \R^n \rightarrow \R^n $ is assumed to be a vector-valued Carath\'eodory function (i.e., $ \mathbf{a}$ is measurable in the $x$-variable and continuous in the $\xi$-variable) of $p$-Laplacian type whose prototype is
\begin{equation}\label{model}
\mathbf{a}(x,\xi) = (A(x)\xi\cdot\xi)^{\frac{p-2}{2}}A(x)\xi,
\end{equation}
where $A:\R^n\to \R^{n^2}$ is an $n\times n$ matrix satisfying that  
$$
\nu |\eta|^2 \leq A(x)\eta\cdot\eta 
\  \text{ and }\  |A(x)| \le L, \quad x,\eta\in\R^n,
$$
for some $0<\nu\le L$.  
%Note that  the lower order term of the equations \eqref{maineq} means $V|u|^{s-2}u$.

For equations \eqref{maineq} with identically zero potential function, i.e. $V\equiv0$, the $L^q$-regularity theory has been extensively studied after pioneering work of Calder\'on and Zygmund in \cite{CZ1} where $L^q$-regularity estimates for  Poisson equations $-\Delta u=f$ or $\mathrm{div} F$ were proved. 
In particular, Byun and Wang established the global $L^q$-regularity for the linear equations with Bounded Mean Oscillation(BMO) coefficients in Reifenberg flat domains \cite{BW1}. 
We also refer to \cite{Mey1,D2} and references therein for more $L^q$-regularity results of the linear equations.  
With regard to nonlinear equations of the $p$-Laplacian type, Iwaniec \cite{Iw1} first obtained $L^q$-regularity estimates 
%in the $p$-Laplacian case, i.e., $\ba(x,\xi)=|\xi|^{p-2}\xi$,
for the simplest case $\ba(x,\xi)=|\xi|^{p-2}\xi$,
 and Caffarelli and Peral \cite{CP1} considered general $\ba(x,\xi)$ that can be discontinuous for $x$-variable, see also \cite{BPS1,DM1,LO1,KZ1} for further results. 
Based on those works, $L^q$-regularity theory  has been actively developed for various equations generalized from the $p$-Laplace equations, for instance, parabolic equations of the $p$-Laplacian \cite{AM1,BOR0,Bo1}, elliptic equations with nonstandard growth \cite{AM0,BOh1,BOR1,CM1}.

On the other hand, in the case $V\not\equiv 0$, Shen \cite{Sh1} obtained various $L^q$ estimates for 
the following linear elliptic equation:
%the linear elliptic equations of type
\begin{equation}\label{Schrodingereq}
(-\Delta+V)u = -\Delta u + Vu= \mathrm{div} F \ \ (\text{or}\ \ f) 
\end{equation}
i.e. with the right hand side in divergence form or in nondivergence form.
Here, the linear operator $-\Delta +V$ is called the Schr\"odinger operator since the above equations were motivated by the (normalized) Schr\"odinger equation
\[
i u_t = -\Delta u +Vu.
\]
In particular, in \cite{Sh1}, it is shown that if the potential function $V$ satisfies the reverse H\"older type condition in \eqref{VBqclass} 
(i.e., $V\in \mathcal B_\gamma$)
 for some $\gamma\ge \frac{n}{2}$, then the following estimates hold:
\begin{equation}\label{shen1}
\begin{cases}
\|Du\|_{L^{2q}(\R^n)} \le c \|F\|_{L^{2q}(\R^n)}, \quad &\frac{(\gamma^*)'}{2}\le q\le \tfrac{\gamma^*}{2} \ (\tfrac 12<q<\infty \text{ if }\gamma\ge n),\\
\|V^{\frac{1}{2}}|u|\|_{L^{2q}(\R^n)} \le c \|F\|_{L^{2q}(\R^n)}, \quad& \frac{(\gamma^*)'}{2}\le q\le \gamma \ (\frac{1}{2}<q<\gamma \text{ if }\gamma\ge n),
\end{cases}
\end{equation}
where the constants $c>0$ depend only on $n$, $q$, $\gamma$, and $b_\gamma$. Later, Auscher and Ben Ali \cite{AB1} extended the range of $\gamma$  such that $\gamma>1$ and proved  the first estimate in \eqref{shen1} whenever $\frac{1}{2}< q\le \max\{\frac{\gamma^*}{2}, \gamma\}$ and the second 
one
%estimate in \eqref{shen1} 
whenever $\frac{1}{2}< q\le  \gamma$, by improving the techniques used in \cite{Sh1} and applying the regularity results for the equation \eqref{Schrodingereq}. 
%Note that 
%\[
%\max\big\{\tfrac{\gamma^*}{2}, \gamma\big\}=
%\begin{cases}
%\gamma, \quad &1<\gamma<\frac{n}{2}, \\
%\frac{\gamma^*}{2}, \quad &\gamma\ge \frac{n}{2}.
%\end{cases}
%\]
In addition, Shen's results in \cite{Sh1} have been extended to linear equations with variable coefficients by  Bramanti,  Brandolini,  Harboure and Viviani \cite{BBHV1} and Pan and Tang \cite{PT1}.
We further refer to  \cite{CFG1,D1,Ka1,Ka2,Se1,Sh0,K1,Zho1} and references therein
for regularity theory relevant to the Schr\"odinger type elliptic equations.  

From the equation \eqref{Schrodingereq}, it is natural to consider the following semi-linear equation: 
%(see \cite{BS1}):
\begin{equation}\label{semilineareq}
 -\Delta u + V|u|^{s-2}u= \mathrm{div} F \ \ (\text{or}\ \ f),
\end{equation}
where $1<s<2^*$. For the basic theory of semi-linear elliptic equations, we refer to, for instance, \cite{BS1} and references therein.  The equation \eqref{semilineareq} is the Euler-Lagrange equation of the following energy functional:
\[
v\in W^{1,2}(\Omega)\ \ \mapsto\ \ \int_\Omega |Dv|^{2} + V|v|^s+F\cdot Dv\, dx.
\]
We also note that  time-independent inhomogeneous nonlinear Schr\"odinger(INLS) equations reduce to the equation \eqref{semilineareq} with $F\equiv 0$. We refer to  \cite{Din1,FW1,KLS1,Mer1,RS1} and references therein for INLS equations and \cite{Be1,Gil1,LT1,TM1} for their physical applications.  
For the equations \eqref{semilineareq}, however, in our best knowledge, no systematic $L^q$ regularity result has been reported.

Our main equation \eqref{maineq} is a generalized version of \eqref{semilineareq}. Indeed, the equation \eqref{maineq} with \eqref{model} is the Euler-Lagrange equation of 
\[
v\in W^{1,p}(\Omega)\ \ \mapsto\ \  \int_\Omega (A(x)Dv\cdot Dv)^{\frac{p}{2}}+ V|v|^s+ |F|^{p-2}F\cdot Dv \,dx, \quad 1<p<\infty.
\] 
Recently, for the equation \eqref{maineq} in the case $s=p$ with $V\in \mathcal B_\gamma$ and  $\frac{n}{p}<\gamma < n$, the authors \cite{LO2} derived the following local and global $L^q$ estimates for $1\le q \le \frac{\gamma^*(p-1)}{p}$:
\[
\mint_{B_{r}}\left[|Du|^p+V|u|^{p}\chi_{\{q\le \gamma\}} \right]^q\,dx  \le c \left( \mint_{B_{2r}}|Du|^p+V|u|^{p}\,dx\right)^q +c\mint_{\Omega_{2r}}|F|^{pq}\,dx
\]
with $B_{2r}\Subset\Omega$ and $r\le R_0$, and 
\[
\|Du\|_{L^{pq}(\Omega)} + \|V^{\frac{1}{p}}|u|\|_{L^{pq}(\Omega)} \chi_{\{q\le \gamma \}} \le c_0 \|F\|_{L^{pq}(\Omega)}.
\]
Here, $\chi_{\{q\le\gamma\}}:=1$ or $0$ when $q\le \gamma$ or $q> \gamma$, respectively, and  $c_0$ and $R_0$ depend on $\|V\|_{L^\gamma(\Omega)}$. 
We note that the ranges of $\gamma$ and $q$ are extended from the ones in \eqref{shen1} when $p=2$. (In fact, a naturally extended range could be $\frac{(\gamma^*)'(p-1)}{p}<q\le  \frac{\gamma^*(p-1)}{p}$, but the case $q<1$ is a famous open problem even when $V\equiv 0$, see \cite{Iw1}.) However, these results do not cover the ones in \cite{AB1}. In addition, the above resulting estimates are not sharp since $R_0$ and $c_0$ depend on $\|V\|_{L^\gamma}$, whereas the estimates for the linear case in \cite{AB1,Sh1} are independent of  $\|V\|_{L^\gamma}$. Moreover, the estimates were derived for $|Du|^p$ and $V|u|^p$ considered together.
We also refer to \cite{TNN1} for regularity estimates in the Lorentz spaces.

In this paper, we establish $L^q$ estimates for $Du$ and $V|u|^s$ with possibly sharp ranges of the exponents $\gamma$ and $q$. In particular, we deal with the estimates for $Du$ and $V|u|^s$, separately, and find both local and global estimates that are independent of $\|V\|_{L^\gamma}$.
Our results extend the known linear regularity results, especially in \cite{AB1} to nonlinear setting. Furthermore,  
a BMO, possibly discontinuous, nonlinearity $\ba$ for $x$-variable and non-smooth domain $\Omega$ that beyonds the Lipschitz category are considered as our regularity assumptions.

  The main difficulty is that we cannot take advantage of the techniques for linear operators that were used in \cite{Sh1,AB1}, since we deal with nonlinear problems. Instead, we apply various estimates and techniques used in the study of the regularity theory  to our problems. For instance,  for \eqref{maineq} with $F\equiv 0$, we employ $L^\infty$ estimates and Calder\'on-Zygmund estimates, and use an iteration argument. As a consequence, additionally using the reverse H\"older condition of the potential function $V$, we obtain reverse H\"older type inequalities for $|Dh|^p$ and $V|h|^s,$ where $h$ is a weak solution to a localized equation of \eqref{maineq} with $F\equiv 0$ (see Theorems~\ref{lem:estimatehomo0} and \ref{lem:estimatehomo}). In particular, we stress that we do not make use of Fefferman-Phong inequality in \cite{Fe} or its variation that plays an important role in the proofs of $L^q$ estimates in \cite{AB1,LO2,Sh1}  and in turn, the approach in this paper is simpler than eariler ones. Moreover, we obtain comparison estimates for the gradient of solutions and the lower order terms separately (see  Lemma~\ref{lem:approximation}). Therefore, we can handle them independently in the final proof of $L^q$ estimates to discover better resulting estimates.

 The remaining is organized as follows. In the next section, we state our main results. Section 3 contains various regularity estimates for the homogenous equations with auxiliary lemmas. Lastly, we prove our main results in Section 4.

%%%%%%%%%%%%%%%%%%%%%%%
%%%%%%%%%%%%%%%%%%%%%%%%%

%%%%%%%%%%%%%%%%%%%%%%%%%%%

\section{\bf  Main result }
\label{secpre}

\subsection{Preliminaries}
We start with standard notation and definitions. 
We write  $B_r(y)$ for  the open ball in $\R^n$ with center $y\in \R^n$ and radius $r>0$. We denote $\Omega_r (y)= B_r(y) \cap \Omega$ and $  \partial_{\mathrm{w}}\Omega_{r}(y) = B_r(y) \cap \partial \Omega.$
 For the sake of simplicity, we write $B_r=B_r(0)$ %,$B_r^+=B_r^+(0)$ 
and $\Omega_{r} = \Omega_{r}(0).$
For a measurable function $g:U\to \R$ with $U\subset\R^n$, we define
$$ (g)_U:=\mint_{U} g \; dx = \frac{1}{|U|} \int_{U} g \;dx, $$
$$
g_+:=\max\{g,0\} \quad \text{and}\quad g_-:= (-g)_+=\max\{-g,0\}.
$$ 
%We shall use the notation
%$$ (g)_U:=\mint_{U} g \; dx = \frac{1}{|U|} \int_{U} g \;dx, $$
%where $U\subset\R^n$ with $|U|>0$ and $g:U\to \R$ is integrable in $U$.
For $g\in W^{1,p}(\Omega_r(y))$, the boundary condition ``$g=0$ on $\partial_{\mathrm{w}}\Omega_r$" means the zero extension of $g$ to $B_r(y)$ is in $W^{1,p}(B_r(y))$. We also define
\begin{equation}\label{gamma*}
\gamma^*:=
\begin{cases}
\frac{n\gamma}{n-\gamma}, & \quad \text{when }\ 1<\gamma <n,\\
\infty, & \quad \text{when }\ \gamma \ge n.
\end{cases}
\end{equation}

We say that a nonnegative function $V:\R^n\to [0,\infty)$ belongs to $\mathcal{B}_\gamma$ for some $ \gamma >1$ if $V\in L^{\gamma}_{\loc}(\R^n)$ and there exists a constant $b_{\gamma}>0$ such that the \textit{reverse H\"older inequality}
\begin{equation}\label{VBqclass}
\left( \frac{1}{|B|} \int_{B} V^{\gamma} \, dx \right)^{\frac{1}{\gamma}} \leq b_\gamma \left( \frac{1}{|B|} \int_{B} V \, dx \right)
\end{equation}
holds for every ball $B$ in $\R^n.$  This  $\mathcal{B}_\gamma$ class, which is a wide class including all nonnegative polynomials, was introduced independently by Muckenhoupt \cite{Mu} and Gehring \cite{Ge} in the study of weighted norm inequalities and quasi-conformal mapping, respectively. One notable example of this element is $ V(x) = |x|^{-n/ \gamma} $
which actually belongs to the $\mathcal{B}_{\tilde{\gamma}}$ class for all $\tilde{\gamma} < \gamma.$ Moreover, the $\mathcal{B}_\gamma$ class is strongly connected to the Muckenhoupt classes
%, for which we will mention in the next section.
%
%
%We recall the reverse H\"older condition \eqref{reverse}. For a nonnegative $V: \R^n\to[0,\infty)$, we write $V\in \mathcal B_\gamma$ if the inequality \eqref{reverse} holds for every ball $B\in \subset \R^n$.
%In order to introduce major features of the $\mathcal{B}_\gamma$ class, we recall the (Muckenhoupt) 
$A_p$.
% and $A_\infty$. 
We say that a nonnegative function $w\in L^1_{\loc}(\R^n)$ is in  the $A_p$ class for some $1\leq p<\infty$, denoted by $w\in A_p$,  if and only if
$$
[w]_p:=\sup_B \left(\mint_Bw\, dx\right)\left(\mint_B w^{-\frac{1}{p-1}}\, dx\right)^{p-1}<\infty,
$$
%and that $V\in L^1_{loc}(\R^n)$ is 
%in  the $A_\infty$ class, $w\in A_\infty$, if and only if
%$$
%\sup_B \left(\mint_B w\, dx\right)\exp\left(\mint_B \log w^{-1}\,dx\right)<\infty,
%$$
where the supremum is taken over all balls $B \subset \R^n.$  
%Recalling the definition of $\mathcal{B}_{\gamma}$ in \eqref{VBqclass}, we note that $V\in \mathcal{B}_{\gamma}$  for $\gamma\in(1,\infty)$ if and only if
%$$
%b_\gamma=\sup_B\left(\mint_BV\, dx\right)^{-1} \left(\mint_{B}V^\gamma\,dx\right)^{\frac1\gamma}<\infty,
%$$
%where the supremum is taken over all balls $B \subset \R^n,$ which is very similar to the condition of $A_p$, or $A_\infty$, class.  Indeed,
Then, we have the following equivalent relations:
\begin{equation}\label{equivalence}
V\in A_p \textrm{ for some } p>1\ \  \Longleftrightarrow \ \    V\in \mathcal B_{\gamma} \textrm{ for some }\gamma>1
% \Longleftrightarrow V\in A_\infty.
\end{equation}
Here, if we  consider only the left arrow, 
%in the first equivalence relation, 
the constant $p$ and $[V]_p$ are determined by  $\gamma$ and $b_\gamma$.
%By virtue of the self improving property of the $A_p$ class, the $\mathcal{B}_\gamma$ class also has the self improving property:
%if $V\in \mathcal{B}_\gamma$ for some $\gamma>1$, then $V\in B_{\gamma+\epsilon}$ for some small $\epsilon>0$.
%\begin{lemma}\label{lemself}
%If $V\in \mathcal{B}_\gamma$ for some $\gamma>1$, then $V\in B_{\gamma+\epsilon}$ for some small $\epsilon>0$.
%\end{lemma}
We refer to \cite[Theorem 9.3.3]{G1} for its proof and more details on properties and relations of those classes.

\subsection{Main result}

We introduce the main result in this paper.

We first recall the equation \eqref{maineq} with the basic setting in the first paragraph in Introduction, Section~\ref{sec1}.
%Moreover, $V:\R^n\to \R$ is a nonnegative function  such that 
The nonnegative potential $V:\R^n\to[0,\infty)$ satisfies that $V\in L^{\gamma_0}(\Omega)$, where 
\begin{equation}\label{gamma0}
\gamma_0 := 
\begin{cases}
\frac{np}{np-s(n-p)},  &\text{when }\ 1<p<n,\\
\text{any number larger than }1,\quad &\text{when }\ p=n,\\
1, &\text{when }\ p>n.
\end{cases}
\end{equation}
%and $F\in L^p(\Omega,\R^n)$.
Note that %when $1<p<n$,  we have $\gamma_0\ge \frac{n}{p}$  and 
$\gamma_0=\frac{n}{p}$ if $s=p\in (1,n)$.
%
%{\color{blue}(Ok: 여기서 $1<p<n$ 인경우 $s$가 $\frac{np}{n-p}$에 가까워지면 $\gamma_0$는 $n$ 보다 커짐)}
The
nonlinearity $\ba(x,\xi)$
%$\mathbf{a} : \R^n \times \R^n \rightarrow \R^n $
 is assumed that $\ba(x,\cdot)\in C^1(\R^n\setminus\{0\},\R^n)$ for each $x\in\Omega$ and satisfies the following growth and ellipticity conditions:
% is a Carath\'eodory function, that is,  $ \mathbf{a}$ is measurable in the $x$-variable and differentiable in the $\xi$-variable. We will always assume that $\mathbf{a}$ satisfies the following growth and ellipticity conditions:
\begin{equation}\label{aas1}
| \mathbf{a}(x,\xi)|+ | D_{\xi}  \mathbf{a}(x,\xi)||\xi| \leq L |\xi|^{p-1}
\end{equation}
and
\begin{equation}\label{aas2}
D_{\xi}  \mathbf{a}(x,\xi)\, \eta \cdot  \eta \geq \nu |\eta|^2|\xi|^{p-2}
\end{equation}
for almost all $x \in \R^n$ and any $\xi, \eta \in \R^n$($\xi\neq0$) and for some constants $L, \nu$ with $0< \nu \leq 1 \leq L.$  We remark that the  condition \eqref{aas2} implies the monotonicity condition:
\begin{equation}\label{mono}
\left( \mathbf{a}(x,\xi) - \mathbf{a}(x,\eta) \right) \cdot (\xi-\eta) \geq c(p,\nu) \left( |\xi|^2 + |\eta|^2 \right)^{\frac{p-2}{2}} |\xi-\eta|^2
\end{equation}
for any $\xi, \eta \in \R^n$ and a.e. $x \in \R^n.$
Under the above setting,  we say that  $u \in W^{1,p}_0(\Omega)$ is a  weak solution to the problem \eqref{maineq} if
\begin{equation*}%\label{weakform}
 \int_{\Omega} \mathbf{a}(x, Du) \cdot D\varphi \, dx  + \int_{\Omega} V |u|^{s-2}u  \varphi  \, dx =\int_{\Omega} |F|^{p-2} F \cdot D \varphi\, dx
\end{equation*}
holds for any $\varphi \in W_0^{1,p}(\Omega).$  
Under the above setting,
the existence and the uniqueness of the weak solution of \eqref{maineq} follow from the theory of nonlinear functional analysis, see for instance \cite[Chapter 2]{Sho1}.

%For the nonnegative potential $V:\R^n\to [0,\infty)$ considered in the equation \eqref{maineq}, we assume that
%$V$ belongs to $\mathcal{B}_{\gamma}$ for some $ \gamma \in [ \frac{n}{p}, n)$ when $p<n$ and for some $\gamma \in (1,n)$ when $p\geq n.$ 

The following two definitions are related to our main assumptions imposed on the nonlinearlity  $\mathbf{a}$ and the domain $\Omega.$
\begin{definition} \label{smallbmo} We say that $\mathbf{a}:\R^n\times\R^n \to\R^n$ is \textit{$(\delta,R)$-vanishing} if
$$
\sup_{0<\rho\leq R}  \ \sup_{y\in\mathbb{R}^n} \mint_{B_{\rho}(y) }  \left|\Theta\left( \mathbf{a},B_{\rho}(y) \right)(x) \right| \, dx   \leq \delta,
$$
where $$ \Theta\left( \mathbf{a},B_{\rho}(y) \right)(x):= \sup_{\xi \in \R^n \setminus \{ 0\} } \frac{  \left|\mathbf{a}(x,\xi)-(\mathbf{a}(\cdot,\xi))_{B_{\rho}(y)}\right|}{|\xi|^{p-1} }$$
and $$ (\mathbf{a}(\cdot,\xi))_{B_{\rho}(y)} := \mint_{B_{\rho}(y)} \mathbf{a}(x,\xi) \;dx.$$
\end{definition}

The above definition implies that the map $x\mapsto \ba(x,\xi)/|\xi|^{p-1}$ is a locally 
BMO function with the BMO semi-norm  which is
%bounded mean oscillation (BMO) function where the BMO semi-norm is
 less than or equals to $\delta$ for all $\xi\in\R^n$.  Then we note that the nonlinearity $\ba$ can be discontinuous for the $x$-variable. In particular, in the model case in \eqref{model}, Definition~\ref{smallbmo} implies that $A(\cdot)$ is a locally BMO function.

\begin{definition}%\label{Defreifenberg}
 Given $\delta \in (0,\frac18)$ and $R>0,$ we say that $\Omega$ is a $(\delta, R)$-Reifenberg flat domain if for every $x \in \partial\Omega$ and every $\rho \in (0, R],$ there exists a coordinate system $\{ y_1, y_2, \dots, y_n\}$ which may depend on $\rho$ and $x,$ such that in this coordinate system $x=0$ and that
$$ B_{\rho}(0) \cap \{ y_n > \delta \rho \} \subset B_{\rho}(0)  \cap \Omega \subset B_{\rho}(0) \cap \{ y_n > -\delta \rho \}.
$$
\end{definition}
In the above definition, $\delta$ is usually supposed to be less than $\frac18$. This number follows from the Sobolev embedding, see for instance \cite{To1}. In this paper, however, it is not important since we will deal with sufficiently small $\delta$ . We remark that the Lipschitz domains with the Lipschitz constant which is less than or equal to $\delta$ belong to the class of $(\delta,R)$-Reifenberg flat domains for some $R>0$. In addition,  the $(\delta,R)$-Reifenberg flat domain $\Omega$ has the following measure density condition:
\begin{equation}\label{dencon}
\sup_{0<\rho\leq R} \sup_{y \in \overline{\Omega}} \frac{\left|B_{\rho}(y)\right|}{\left|\Omega \cap B_{\rho}(y)\right|} \leq \left( \frac{2}{1-\delta} \right)^{n} \leq \left( \frac{16}{7} \right)^n.
\end{equation}
%\begin{equation}\label{dencon1}
%\inf_{0<\rho\leq R} \inf_{y \in \partial\Omega} \frac{|\Omega\cap B_{\rho}(y) |}{\left|B_{\rho}(y)\right|} \geq \left( \frac{7}{16} \right)^n.
%\end{equation}
We refer to \cite{BW1,PS1,Re1,To1} for more details on the Reifenberg flat domains and their applications.

%\subsection{$\mathcal{B}_\gamma$ class}

%
%{\color{blue}Indeed, we have the following equivalent condition. For its proof, we refer to \cite[Theorem 9.3.3]{G1}.
%%
%\begin{lemma}\label{lemequiv}Let $V\in L^1_{loc}(\R^n)$ be nonnegative. The following are equivalent:
%\begin{itemize}
%\item[(a)] $V\in A_\infty$.
%\item[(b)] There exist $\theta,\sigma\in(0,1)$ such that
%$$
%\left|\left\{x\in B: V(x)\leq \theta \mint_{B}V\, dy \right\}\right|\leq \sigma|B|
%$$
%for every ball $B$ in $\R^n$.
%\item[(c)] $V\in B_{\gamma}$ for some $\gamma>1$.
%\item[(d)] $V\in A_p$ form some $p>1$.
%\end{itemize}
%
%In particular, if $V\in A_\infty$, then there exists $\theta\in(0,1)$ such that
%$$
%\left|\left\{x\in B: V(x)\leq \theta \mint_{B}V\, dy \right\}\right|\leq \frac12|B|
%$$
%for every ball $B$ in $\R^n$, that is, one can choose that $\sigma=\frac12$.
%\end{lemma}
%
%%
%%{\color{red} (1/13)
%%\begin{lemma}\label{lemAp}\cite{}
%%If $w\in A_p$ for some $1\le p <\infty$, for every  ball $B \subset \R^n$ and measurable subset $A$ of $B$
%%\[
%%\left(\frac{|A|}{|B|}\right)^p\le [w]_{A_p} \frac{w(A)}{w(B)}.
%%\]
%%
%%\end{lemma}
%%}
%}

Now we present the main results in this paper.  The first result is local $L^q$ estimates in both interior and boundary regions.

\begin{theorem}
\label{mainthm}
Let $1<p<\infty$, $1<s<p^*$, $\gamma_0$ be from \eqref{gamma0}, $\ba:\R^n\times \R^n\to\R^n$ satisfy \eqref{aas1} and \eqref{aas2}, $V:\R^n\to [0,\infty)$ with $V\in L^{\gamma_0}(\Omega)$, and $F\in L^p(\Omega,\R^n)$.    Suppose that the constants $p$, $\gamma$, $q$ satisfy specific conditions given below and that $V\in\mathcal B_\gamma$. 
%Suppose the constants $\gamma,q>1$ satisfy the conditions in each Suppose $V\in \mathcal{B}_{\gamma}$ %and {\color{red} $q>1$,} 
%where the specific conditions of $\gamma$ %and $q$ 
%are given below. 
There exists  a small  $\delta = \delta(n, p, L, \nu,\gamma)>0$ such that if $\mathbf{a}$ is $( \delta, R)$-vanishing, $\Omega$ is a $(\delta, R)$-Reifenberg flat domain for some $R\in(0,1),$ and $u\in  W^{1,p}_0(\Omega)$ is a weak solution to \eqref{maineq}, then for any $x_0\in \overline{\Omega}$ and $r \in (0, \frac{R}{2}]$ we have  the following estimates:
\begin{itemize}
\item[(1)]
If $1<p<\infty$, $\tilde \gamma < \gamma <\infty$ where
\begin{equation}\label{gamma}
   \tilde\gamma :=\max\left\{1,\frac{np}{np-n+p}\right\}=
\left\{ \begin{array}{cl}
 \frac{np}{np-n+p} & \text{if }\ 1<p<n,\\
 1& \text{if }\ p\ge n,
 \end{array}\right. 
\end{equation}
and $1<q<\frac{\gamma^*(p-1)}{p}$,
\begin{equation}\label{mainest1}
 \mint_{\Omega_{ r}(x_0)} |Du|^{pq}\,  dx 
\leq c \left( \mint_{\Omega_{2r}(x_0)} |Du|^p \,dx\right)^q + c  \mint_{\Omega_{2 r}(x_0)}   |F|^{pq} \, dx.
\end{equation}
\item[(2)] If $p\ge 2$, $1<\gamma < \infty$, and $1< q < \gamma$,
\begin{equation}\label{mainest2}
 \mint_{\Omega_{ r}(x_0)}  [V|u|^s]^{q}\,  dx \leq c  \left(\mint_{\Omega_{2r}(x_0)} V|u|^{s} \,dx\right)^q + c  \mint_{\Omega_{2r}(x_0)}   |F|^{pq} \, dx.
\end{equation}
\item[(3)]  If $1< p< 2$, $\frac{n}{p} \le \gamma <\infty$ and $1< q <\gamma$,
\begin{equation}\label{mainest3}
 \mint_{\Omega_{ r}(x_0)}  [V|u|^s]^{q}\,  dx \leq c  \left(\mint_{\Omega_{2r}(x_0)} |Du|^p+V|u|^{s} \,dx\right)^q + c  \mint_{\Omega_{2r}(x_0)}   |F|^{pq} \, dx.
\end{equation}
\end{itemize}
Here the constants  $c>0$ depend on $n,p,L,\nu,s,\gamma,b_{\gamma},q.$ 
\end{theorem}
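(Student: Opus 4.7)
The plan is to run the standard perturbation/Calder\'on--Zygmund--Byun--Wang scheme that is built on three ingredients: (i) a comparison to a \emph{homogeneous reference solution}, (ii) the reverse H\"older type self-improvement of the reference solution stated in Theorems~\ref{lem:estimatehomo0} and \ref{lem:estimatehomo}, and (iii) a modified Vitali covering / good-$\lambda$ argument adapted to $(\delta,R)$-Reifenberg flat domains. The role of the measure-density bound \eqref{dencon} is to allow the boundary and interior covering to be handled uniformly; the role of the BMO-vanishing hypothesis on $\ba$ is to drive the comparison error below any prescribed threshold.

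In the comparison step, for each reference ball $B_\rho(y)$ with $B_{2\rho}(y)\subset B_{2r}(x_0)$ one freezes the $x$-dependence of $\ba$ by averaging, and constructs $h\in u+W^{1,p}_0(\Omega_\rho(y))$ solving $-\mathrm{div}\,\bar{\ba}(Dh)+V|h|^{s-2}h=0$ in $\Omega_\rho(y)$ with $h=u$ elsewhere on the boundary. Lemma~\ref{lem:approximation} is invoked to produce separate comparison bounds of the form
\[
\mint_{\Omega_\rho(y)} |D(u-h)|^p\,dx \;+\; \mint_{\Omega_\rho(y)} V|u-h|^s\,dx \;\le\; \eta\,\mint_{\Omega_{2\rho}(y)}\!\!\bigl[|Du|^p+V|u|^s\bigr]\,dx + c(\eta)\mint_{\Omega_{2\rho}(y)}|F|^p\,dx,
\]
with $\eta=\eta(\delta)\to 0$ as $\delta\to 0$. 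Theorems~\ref{lem:estimatehomo0} and \ref{lem:estimatehomo} then upgrade $|Dh|^p$ and $V|h|^s$ to $L^{q_1}$ on $\Omega_{\rho/2}(y)$ for some $q_1>q$ depending only on $(\gamma,p,s,n)$; here the hypothesis $V\in\mathcal B_\gamma$ plays its decisive role through the reverse H\"older inequality \eqref{VBqclass}, while the dichotomy between Theorem~\ref{mainthm}(2) and Theorem~\ref{mainthm}(3) reflects exactly which of $|Dh|^p$ and $V|h|^s$ can be isolated on the right-hand side of the homogeneous reverse H\"older bound --- for $p\ge 2$ the monotonicity \eqref{mono} decouples them, whereas for $1<p<2$ the degenerate weight $(|Du|^2+|Dh|^2)^{(p-2)/2}$ forces both quantities to remain coupled, which is why \eqref{mainest3} carries both $|Du|^p$ and $V|u|^s$ on the right.

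With Steps (i)--(ii) in hand, one runs a two-sided maximal function good-$\lambda$ argument in the spirit of Byun--Wang: defining truncated maximal operators $\mathcal{M}_R$ restricted to $\Omega_{2r}(x_0)$ and level sets $E(\lambda)=\{\mathcal{M}_R(|Du|^p)>\lambda\}$ (and analogously for $V|u|^s$), a Vitali cover of $E(N\lambda)$ together with the comparison and reverse H\"older bounds yields
\[
|E(N\lambda)\cap\Omega_r(x_0)|\;\le\;\varepsilon\bigl(|E(\lambda)\cap\Omega_{2r}(x_0)|+|\{\mathcal{M}_R(|F|^p)>\delta\lambda\}\cap\Omega_{2r}(x_0)|\bigr)
\]
with $\varepsilon=\varepsilon(\delta)$ small, and \eqref{mainest1}--\eqref{mainest3} follow by integrating against $\lambda^{q-1}\,d\lambda$ and using the strong $(q,q)$ boundedness of the Hardy--Littlewood maximal operator for $q>1$.

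I expect the main obstacle to be the comparison/approximation step when $1<p<2$ and $s\neq p$: producing a bound on $\mint V|u-h|^s$ that depends on $\delta$ (not on $\|V\|_{L^\gamma}$) is delicate because the natural test function $u-h$ must be used simultaneously for the divergence term and the Schr\"odinger term, and one cannot freely interpolate between them in the sub-quadratic regime. This is precisely the point at which the earlier argument of \cite{LO2} was forced to let the threshold radius $R_0$ depend on $\|V\|_{L^\gamma}$; bypassing Fefferman--Phong inequalities here and keeping the final constants independent of $\|V\|_{L^\gamma}$ is the innovation that makes the statement of Theorem~\ref{mainthm} sharp.
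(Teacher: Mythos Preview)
Your overall architecture---comparison to a homogeneous reference, reverse H\"older self-improvement for the reference via Theorems~\ref{lem:estimatehomo0} and \ref{lem:estimatehomo}, then a level-set/covering argument---is correct, but two concrete points are off and one methodological choice diverges from the paper.

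\textbf{Where $\delta$ enters.} You freeze the $x$-dependence of $\ba$ and claim the comparison error is $\eta(\delta)\to 0$. The paper does \emph{not} freeze: the reference $h_i$ solves $-\mathrm{div}\,\ba(x,Dh_i)+V|h_i|^{s-2}h_i=0$ with the \emph{original} $\ba(x,\cdot)$, and Lemma~\ref{lem:approximation} is then purely a monotonicity/Young argument that is completely independent of $\delta$. The smallness in the comparison comes from a free parameter $\epsilon$ (and the corresponding weight $\delta_1$ placed in front of $|F|^p$ in the stopping-time threshold $\lambda_0$), not from the BMO oscillation. The $(\delta,R)$-vanishing condition is used \emph{only} inside Theorem~\ref{lem:estimatehomo} (through Lemma~\ref{thmDwbdd}); this is why in Case~(2) of the theorem---where only Theorem~\ref{lem:estimatehomo0} is needed---no smallness on $\ba$ or $\Omega$ is required at all (Remark~\ref{rmk:deltachoice}). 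Since you cite Lemma~\ref{lem:approximation} but also freeze coefficients, your sketch is internally inconsistent.

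\textbf{The comparison must be case-specific.} The single displayed comparison bound you wrote, with $\eta\,\mint[|Du|^p+V|u|^s]$ on the right, is too coarse to yield \eqref{mainest1}: running the iteration with that bound would leave $V|u|^s$ on the right-hand side of the gradient estimate. The paper's Lemma~\ref{lem:approximation} gives sharper, separate inequalities: for $p\ge 2$, $\int|Du-Dh|^p\le c\int|F|^p$ with \emph{no} $\epsilon$-term at all, and for $1<p<2$, $\int|Du-Dh|^p\le \epsilon\int|Du|^p+c(\epsilon)\int|F|^p$ still free of $V|u|^s$. These are exactly what allow the gradient estimate to decouple from the lower-order term. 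The three cases of the theorem are then run simultaneously by setting $\Phi(v;x)$ equal to $|Dv|^p$, $V|v|^s$, or their sum, with the matching comparison inequality in each case.

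\textbf{Covering scheme.} The paper follows the Acerbi--Mingione exit-time approach: Vitali covering of the upper level set $\{\Phi(u;\cdot)>\lambda\}$ at the scale where the average of $\Phi(u;\cdot)+\delta_1^{-1}|F|^p$ first equals $\lambda$, followed by a Fubini/truncation computation. Your proposed Byun--Wang maximal-function good-$\lambda$ is a legitimate alternative that would also close, but it is not what the paper does, and it would require reworking the comparison so that the good-$\lambda$ inequality holds at the level of maximal functions rather than pointwise $\Phi$.
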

%Note that when $1<p<n$,  we have $\frac{\gamma^*(p-1)}{p}=\frac{n(p-1)}{(n-\gamma)p}\gamma \ge \gamma$ since $\gamma\ge \gamma_0\ge \frac{n}{p}$.  

Regarding the ranges of $\gamma$ and $q$, we will discuss in Remarks~\ref{rmk:range1} and \ref{rmk:range2} below.

\begin{remark} 
In the above theorem, we obtain local $L^q$ estimates for $|Du|^p$ without 
the lower order term  $V|u|^{s}$, and for  $V|u|^{s}$ without the term $|Du|^p$ when $p>2$.
However, when $1<p<2$, the local $L^q$ estimates for $V|u|^{s}$  involve $|Du|^p$, which follows from the approximation lemma, Lemma~\ref{lem:approximation}. As a consequence, the lower bound of $\gamma$ in (3) has to be chosen as not $1$ but $\frac{n}{p}$. This choice insures $\frac{\gamma^*(p-1)}{p}\ge \gamma$, hence we have $|Dh_i|^p + V|h_i|^s \in L^{\gamma}(\Omega_{5\rho_i}(y_i))$ in \eqref{tPhibdd} with Case 3 in the proof of Theorem~\ref{mainthm}.
\end{remark}

%\begin{remark} Comparing the above theorem with the previous result in \cite{LO2} (in particular, Theorem 2.3), there are the following improvement: 
%\begin{itemize}
% \item[(i)]  Estimate for $|Du|^p$: we obtain  the estimate without the term $V|u|^s$. Moreover, when $1<p<n$, we see that $\frac{np}{np-n+p}<\frac{n}{p}$, hence, compared with earlier results in \cite{LO2}, the range of $\gamma$ is extended .
%
% 
% 
%\item[(ii)] Estimate for $V|u|^s$: we also obtain the estimate  without the term $|Du|^p$ when $p\ge2$, and we consider any $\gamma\in(1,\infty)$. However, when $1<p<2$, the estimate involves $|Du|^p$ which follows from the approximation lemma, Lemma~\ref{lem:approximation}. This implies the lower bound of $\gamma$ is not $1$ but $\frac{n}{p}$, which insures $\gamma^*(p-1)\ge p\gamma$ and hence $|Du|^p\in L^q_{\loc}(\Omega)$ whenever $q< \gamma$ by (1).
%
%\item[(iii)]  The radii of balls in the above theorem are independent of $\|V\|_{L^\gamma(\Omega)}$. On the other hand, in \cite[Theorem 2.3]{LO2}, they depend on $\|V\|_{L^\gamma(\Omega)}$. As a consequence of this improvement 
%
%\item[(iv)] The exponent in the lower order term is extended from $s=p$ to $s\in(1,p^*)$.
%
%
%
%\end{itemize}
%\end{remark}

The second result is global $L^q$ estimates.
As a consequence of Theorem~\ref{mainthm}, %and the preceding remark, 
we obtain the following global estimates  by using the standard covering argument in the proof of  \cite[Corollary 2.6]{LO2}, hence we omit the proof. 
%we obtain the following global gradient estimates for solutions to \eqref{maineq}. This can be proved  by using the standard covering argument in the proof of  \cite[Corollary 2.6]{LO2}, hence we omit the proof.

\begin{theorem}\label{maincor}
Let $1<p<\infty$, $1<s<p^*$, $\gamma_0$ be from \eqref{gamma0}, $\ba:\R^n\times \R^n\to\R^n$ satisfy \eqref{aas1} and \eqref{aas2}, $V:\R^n\to [0,\infty)$ with $V\in L^{\gamma_0}(\Omega)$, and $F\in L^p(\Omega,\R^n)$.   
%Suppose $V\in \mathcal{B}_{\gamma}$ % and {\color{red} $q>p$,}
% where the specific conditions of $\gamma$ %and $q$ 
% are given below. 
Suppose that the constants $p$, $\gamma$, $q$ satisfy specific conditions given below and that $V\in\mathcal B_\gamma$.
 There exists  a small  $\delta = \delta(n, p, L, \nu,\gamma) \in (0,\frac18)$ such that  if $\mathbf{a}$ is $( \delta, R)$-vanishing, $\Omega$ is a $(\delta, R)$-Reifenberg flat domain for some $R\in(0,1),$ and $u\in  W^{1,p}_0(\Omega)$ is a weak solution to \eqref{maineq}, then we have  the following estimates:
\begin{itemize}
\item[(1)]
If $1<p<\infty$, $\tilde\gamma<\gamma<\infty$
where $\tilde\gamma$ is given in \eqref{gamma},
and $1<q < \frac{\gamma^*(p-1)}{p}$, then we have
\begin{equation*}%\label{maingest1}
\Vert Du \Vert_{L^{pq}(\Omega)} \leq c\left(\frac{\mathrm{diam}(\Omega)}{R}\right)^{n(q-1)}\Vert F \Vert_{L^{pq}(\Omega)}.
%\Vert |Du|^p \Vert_{L^{q}(\Omega)} \leq c\Vert |F|^p \Vert_{L^{q}(\Omega)} 
\end{equation*}

\item[(2)] If $p\ge 2$, $1<\gamma<\infty$ and $1<q<\gamma$, or if $1< p< 2$, $\frac{n}{p}\le \gamma<\infty$ and $1<q<\gamma$,  then we have
\begin{equation*}%\label{maingest2}
\Vert V^{\frac{1}{p}}|u|^{\frac{s}{p}} \Vert_{L^{pq}(\Omega)} \leq c\left(\frac{\mathrm{diam}(\Omega)}{R}\right)^{n(q-1)}\Vert F \Vert_{L^{pq}(\Omega)}.
\end{equation*}
%\item[(2)] Suppose $p\ge 2$, $\gamma\in(1,\infty)$, and $q\in(1,\gamma)$.   We have
%\begin{equation*}%\label{maingest2}
%\Vert V|u|^s \Vert_{L^{q}(\Omega)} \leq c\left(\frac{\mathrm{diam}(\Omega)}{R}\right)^{n(q-1)}\Vert |F|^p \Vert_{L^{q}(\Omega)} 
%\end{equation*}
%\item[(3)]  Suppose $1< p< 2$, $\gamma\in[\frac{n}{p},\infty)$ and $q\in(1,\gamma)$.  We have
%\begin{equation*}%\label{maingest3}
% \Vert V|u|^s \Vert_{L^{q}(\Omega)} \leq c\left(\frac{\mathrm{diam}(\Omega)}{R}\right)^{n(q-1)}\Vert |F|^p \Vert_{L^{q}(\Omega)}.
%\end{equation*}
\end{itemize}
Here the constants  $c>0$ depend on $n,p,L,\nu,s,\gamma,b_{\gamma},q.$ 
%\begin{equation}\label{maingloest}
%\Vert Du \Vert_{L^{q}(\Omega)} +\chi_{\{q<p\gamma\}} \Vert V^{\frac1p}u \Vert_{L^{q}(\Omega)} \leq c\left(\frac{\mathrm{diam}(\Omega)}{\tilde{R}}\right)^{n\left(\frac{1}{q}-\frac{1}{p}\right)}\Vert F \Vert_{L^{q}(\Omega)} 
%%\end{equation}
%for some $c=c(n,p,q,\gamma,L,\nu,b_{\gamma})>0.$ where
%$\tilde{R} := \min\left\{ R, \Vert V \Vert_{L^{\gamma}(\Omega)}^{-\frac{1}{p-\frac{n}{\gamma}}}\right\}.$
%Here,
% $\chi_{\{q<p\gamma\}}:= 1$ if $q<p\gamma$ and $\chi_{\{q<p\gamma\}}:= 0$ if $q\geq p\gamma.$
\end{theorem}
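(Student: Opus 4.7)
The plan is to deduce Theorem~\ref{maincor} from the local estimates of Theorem~\ref{mainthm} by a finite covering argument together with the basic a priori energy estimate for \eqref{maineq}, exactly as the authors indicate by reference to \cite[Corollary 2.6]{LO2}. First I would choose a finite family $\{B_{R/2}(x_i)\}_{i=1}^{N}$ covering $\overline{\Omega}$, with $x_i\in\overline{\Omega}$, $N\le c(n)(\mathrm{diam}(\Omega)/R)^n$, and such that the doubled balls $\{B_R(x_i)\}$ have bounded overlap; such a family is produced by a standard Vitali-type selection.

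For part (1), I apply \eqref{mainest1} on each $\Omega_{R/2}(x_i)$ with $r=R/2$ and clear the averaging to obtain
\begin{equation*}
\int_{\Omega_{R/2}(x_i)}|Du|^{pq}\,dx \le c\, R^{n(1-q)}\Bigl(\int_{\Omega_R(x_i)}|Du|^p\,dx\Bigr)^{q} + c\int_{\Omega_R(x_i)}|F|^{pq}\,dx.
\end{equation*}
Summing over $i$, using the bounded overlap of the doubled balls and the elementary inequality $\sum_i a_i^q\le(\sum_i a_i)^q$ (valid for $q\ge 1$ and $a_i\ge 0$), this leads to
\begin{equation*}
\int_\Omega|Du|^{pq}\,dx \le c\, R^{n(1-q)}\Bigl(\int_\Omega|Du|^p\,dx\Bigr)^{q} + c\int_\Omega|F|^{pq}\,dx.
\end{equation*}
Next, testing the weak formulation of \eqref{maineq} with $\varphi=u$ and invoking \eqref{aas2} yields the a priori bound $\int_\Omega|Du|^p+V|u|^s\,dx \le c\int_\Omega|F|^p\,dx$. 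Combining this with H\"older's inequality $\int_\Omega|F|^p\,dx\le|\Omega|^{1-1/q}\|F\|_{L^{pq}(\Omega)}^p$ and the trivial bound $|\Omega|\le c(n)\mathrm{diam}(\Omega)^n$, I arrive at the stated estimate, the factor $(\mathrm{diam}(\Omega)/R)^{n(q-1)}$ emerging from the product $R^{n(1-q)}\cdot|\Omega|^{q-1}$.

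Part (2) is entirely analogous. When $p\ge 2$ I apply \eqref{mainest2}, whose right-hand side involves only $V|u|^s$, and use the corresponding half of the energy bound; when $1<p<2$ I apply \eqref{mainest3}, whose right-hand side contains both $|Du|^p$ and $V|u|^s$, both of which are nonetheless jointly controlled by the same energy estimate. The covering and H\"older steps proceed exactly as in part (1). The only point requiring a little care in the whole argument is the use of $\sum_i a_i^q\le(\sum_i a_i)^q$ in place of a naive term-by-term bound, which avoids an extra multiplicative factor of $N$ in front of the first term and so preserves the claimed sharp dependence on $\mathrm{diam}(\Omega)/R$; everything else is routine bookkeeping.
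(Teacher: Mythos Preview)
Your proposal is correct and matches the approach the paper indicates (a standard covering argument as in \cite[Corollary~2.6]{LO2}, which the authors omit). The only cosmetic points are that the covering with bounded overlap is more naturally produced by a maximal $R/2$-separated net than by Vitali, and that your argument actually yields the exponent $\tfrac{n(q-1)}{pq}$ on $\mathrm{diam}(\Omega)/R$ at the level of the $L^{pq}$ norm, which is stronger than (hence implies) the stated exponent $n(q-1)$ whenever $\mathrm{diam}(\Omega)\ge R$.
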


\begin{remark}\label{rmk:range1} 
 If $V\in \mathcal B_{\gamma}$, then $V$ belongs to the $\mathcal B_{\gamma+\epsilon}$ class for some small $\epsilon>0$ from the self improving property of the $\mathcal B_\gamma$ class (see \cite{Ge}). %in Lemma \ref{lemself}. 
Therefore, by considering $\gamma+\epsilon$ instead of $\gamma$ in Theorems \ref{mainthm} and \ref{maincor},  the ranges of $q$ can be extended to $ q\in [1,\frac{\gamma^*(p-1)}{p}]$ and $ q \in [1, \gamma]$, respectively. Note that the case $q=1$ is trivial. On the other hand, as mentioned in Introduction, $L^q$ estimates with $q<1$ is an open problem even in the case $V\equiv 0$.
\end{remark}

\begin{remark}\label{rmk:range2} 
We further comment on the ranges of $\gamma$ and $q$ for the $L^q$ estimates for $|Du|^p$ in Theorem~\ref{mainthm} (1) and Theorem~\ref{maincor} (1). 
\begin{itemize}
\item[(i)] Suppose $p<n$. Then the range of $\gamma$ is $(\frac{np}{np-n+p},\infty)$. On the other hand, for the linear case with $p=2$, we see from  \cite{AB1} that   it is $(1,\infty)$. Therefore, the case $\gamma\in (1,\frac{np}{np-n+p})$ seems missing in our results. However, if $\gamma<\frac{np}{np-n+p}$, we have $\frac{\gamma^*(p-1)}{p}<1$ and the $L^q$ estimate with $q<1$  is the open problem mentioned above. Therefore, our range is best for now.

\item[(ii)] We note that  $\frac{\gamma^*(p-1)}{p}<\gamma$ when $\gamma<\frac{n}{p}$.  For the linear case with $p=2$, in \cite{AB1},
the range of $q$ is $(\frac{1}{2},\gamma)$ when $\gamma<\frac{n}{2}$. Therefore, the range of $q$ in our results could be extended to $1< q<\gamma$ when $\gamma<\frac{n}{p}$.
\end{itemize}
\end{remark}

\begin{remark}
From Remark~\ref{rmk:deltachoice}, we see that the estimates in Theorem~\ref{mainthm} (2)  and Theorem~\ref{maincor} (2) when $p\ge 2$ still hold without the $(\delta,R)$-vanishing condition of  $\ba$ and the $(\delta,R)$-Reifenberg flat condition of  $\Omega$. Moreover,
in Theorem~\ref{mainthm} (1)  and Theorem~\ref{maincor} (1), the dependence $\gamma$ of $\delta$ is replaced with $q$, when $\gamma\ge n$.
\end{remark}

\section{Estimates for homogenous equations}\label{sechomo}

In this section, we prove various regularity estimates for weak solutions to localized equations of \eqref{maineq} with $F\equiv 0$.

%\section{Preliminaries}\label{Preliminaries}
We start by recalling interior and boundary Calder\'on-Zygmund estimates for $p$-Laplace type elliptic equations. In particular, we consider non-divergence data. 
 For the following results, we refer to, for instance, \cite{LO1}.
%and we refer to, for instance, \cite{LO1} for the case of Reifenberg flat domain.

%%{\color{red}By the Sobolev's embedding, we consider two cases that $q>\max\{p,\frac{(p-1)n}{n-1}\}$ with $1<p<\infty$ and $p<q\leq \frac{(p-1)n}{n-1}$ with $p>n$.} 
%
%We also remark that when $2-\frac1n <p<n$, similar results can be found in for instance \cite{Ph1}. 
%

%{\color{blue}(Jihoon: I changed the exponent $q$ to $\gamma$)}

\begin{lemma}\label{thmDwbdd}
Let $1<p<\infty$ and $\tilde\gamma<\gamma<n$ where $\tilde\gamma$ is given in \eqref{gamma}. There exists a small $ \delta = \delta(n, p, L, \nu,  \gamma) \in (0,\frac18) $ so that if $\mathbf{a}$ is $( \delta, R)$-vanishing and $\Omega$ is a $(\delta,R)$-Reifenberg flat domain for some $R\in(0,1)$, then for any $x_0\in\overline\Omega$, $r\in(0,\frac{R}{2}]$ and for any weak solution $h\in W^{1,p}(\Omega_{2r}(x_0))$ to %\eqref{homoeqfF}
\begin{equation*}
%\label{homoeqfF}
\left\{\begin{array}{rclcc}
-\mathrm{div}\, \mathbf{a}(x,Dh)&  =  &f & \textrm{ in } & \Omega_{2r}(x_0),  \\
h & = & 0 & \textrm{ on } &  \partial_{\mathrm{w}}\Omega_{2r}(x_0)\ \text{if}\ B_{2r}(x_0)\not\subset\Omega.
\end{array}\right.
\end{equation*}
with $f\in L^{\gamma}(\Omega_{2r}(x_0))$, we have
\begin{equation*}%\label{homoeqestimate}
\begin{split}
 \left(\mint_{\Omega_r(x_0)} |Dh|^{\gamma^*(p-1)} \, dx\right)^{\frac{p}{\gamma^*(p-1)}}   
%&\leq& c \left( \mint_{\Omega_{2r}(x_0)} |Du|^{p}\, dx\right)^{\frac{1}{p}} \\
%\nonumber &&+ c \left(\mint_{\Omega_{2r}(x_0)} [\mathcal{M}_{p_0}(|f|^{p_0}\chi_{\Omega_{2r}(x_0)})]^{\frac{q}{p_0(p-1)}} + |F|^q \,dx \right)^{\frac1q}\\
 &\leq  c \mint_{\Omega_{2r}(x_0)} |Dh|^{p}\, dx\\
%+c\left(\mint_{\Omega_{2r}(x_0)} |F|^q \,dx\right)^\frac{1}{q} \\
&\qquad + c \left(   \mint_{\Omega_{2r}(x_0)} |r f  |^{\gamma} \, dx      \right)^{\frac{p}{\gamma(p-1)}}
\end{split}
\end{equation*} 
for some $c=c(n,p,L,\nu, \gamma)>0.$
% where $p_0>1$ is denoted by
%$$
%p_0:=\left\{\begin{array}{lcl}
%(p^*)'=(p')_*=\frac{np}{np-n+p}&\textrm{if}&p\in(1,n),\\
%\textrm{any number in }  (1,(\frac{q}{p-1})_*) & \text{if} &p\in [n,\infty).
%\end{array}\right.
%$$
\end{lemma}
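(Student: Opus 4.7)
The plan is to deduce this from the now-standard interior/boundary Calder\'on--Zygmund estimates for $p$-Laplacian type equations with divergence-form data (see, e.g., \cite{LO1,BW1}) by first rewriting the non-divergence datum $f\in L^{\gamma}$ as the divergence of a vector field $F$. The gain in integrability built into this representation, from $L^{\gamma}$ to $L^{\gamma^*}$, is exactly matched by the scaling of the $p$-Laplacian and delivers the target exponent $\gamma^{*}(p-1)$.

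Concretely, I would extend $f$ by zero to all of $\R^{n}$ and set $F:=-Dv$, where $v$ is the Newtonian potential of $f$, so that $-\Delta v = f$ on $\R^n$ and hence $\mathrm{div}\,F=f$; the pointwise bound $|F|\le c\, I_{1}|f|$, with $I_{1}$ the Riesz potential of order one, holds in every dimension. The Hardy--Littlewood--Sobolev inequality then gives $\|F\|_{L^{\gamma^{*}}(\R^{n})}\le c\|f\|_{L^{\gamma}(\Omega_{2r}(x_{0}))}$, and the scaling identity $\frac{n}{\gamma}-\frac{n}{\gamma^{*}}=1$ converts this into
\[
\left(\mint_{B_{2r}(x_{0})}|F|^{\gamma^{*}}\,dx\right)^{1/\gamma^{*}}\le c\left(\mint_{\Omega_{2r}(x_{0})}|rf|^{\gamma}\,dx\right)^{1/\gamma}.
\]
Thus $h$ is a weak solution of $-\mathrm{div}\,\ba(x,Dh)=-\mathrm{div}\,F$ in $\Omega_{2r}(x_{0})$, and the zero Dirichlet condition on $\partial_{\mathrm{w}}\Omega_{2r}(x_{0})$ (if any) is preserved because admissible test functions vanish there. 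Setting $G$ by $|G|^{p-2}G:=F$ (equivalently $|G|^{p-1}=|F|$) puts the equation into the canonical form $-\mathrm{div}\,\ba(x,Dh)=-\mathrm{div}(|G|^{p-2}G)$.

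Writing $\alpha:=\gamma^{*}(p-1)/p$, one has $p\alpha=\gamma^{*}(p-1)$ and $p\alpha/(p-1)=\gamma^{*}$, so the bound on $F$ becomes
\[
\left(\mint_{B_{2r}(x_{0})}|G|^{p\alpha}\,dx\right)^{1/(p\alpha)}\le c\left(\mint_{\Omega_{2r}(x_{0})}|rf|^{\gamma}\,dx\right)^{1/(\gamma(p-1))}.
\]
A short computation shows that $\gamma>\tilde\gamma$ is equivalent to $\gamma^{*}(p-1)>p$, i.e.\ $\alpha>1$, which is precisely the regime in which the nonlinear Calder\'on--Zygmund theory produces higher integrability. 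Choosing $\delta=\delta(n,p,L,\nu,\gamma)$ small enough so that the Reifenberg/BMO threshold of \cite{LO1} applies for the target exponent $p\alpha$ yields
\[
\left(\mint_{\Omega_{r}(x_{0})}|Dh|^{p\alpha}\,dx\right)^{1/(p\alpha)}\le c\left(\mint_{\Omega_{2r}(x_{0})}|Dh|^{p}\,dx\right)^{1/p}+c\left(\mint_{\Omega_{2r}(x_{0})}|G|^{p\alpha}\,dx\right)^{1/(p\alpha)},
\]
and substituting the bound for $G$ and raising to the $p$-th power gives exactly the stated inequality. The main subtlety I anticipate is the boundary bookkeeping in the representation step: one must verify that extending $h$ by zero across $\partial_{\mathrm{w}}\Omega_{2r}(x_{0})$ remains compatible with using the $\R^{n}$-defined potential $F$, so that any would-be boundary flux of $F$ is absorbed by the vanishing of admissible test functions on the flat portion of the boundary. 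Beyond that point, the argument is a matter of matching exponents and invoking the already-established divergence-data machinery.
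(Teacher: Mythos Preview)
The paper does not actually prove this lemma; it simply records it as a known result and cites \cite{LO1} (the authors' own earlier paper on nonlinear Calder\'on--Zygmund theory with non-divergence, i.e.\ ``dual,'' data). So there is no in-paper argument to compare against; the relevant comparison is between your reduction and the direct development in \cite{LO1}.

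Your reduction is correct. Extending $f$ by zero and setting $F=-D(\Gamma\ast f)$ gives $\mathrm{div}\,F=f$ on $\R^n$, and Hardy--Littlewood--Sobolev plus the identity $n/\gamma-n/\gamma^*=1$ yields precisely the averaged bound $(\mint_{B_{2r}}|F|^{\gamma^*})^{1/\gamma^*}\le c(\mint_{\Omega_{2r}}|rf|^{\gamma})^{1/\gamma}$. With $|G|^{p-1}=|F|$ and $\alpha=\gamma^*(p-1)/p$, the condition $\gamma>\tilde\gamma$ is exactly $\alpha>1$, and the standard local divergence-form estimate (for which, incidentally, \cite{KZ1} or \cite{BPS1} are better citations than \cite{LO1}, which treats the dual case) delivers the conclusion after matching exponents.

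The boundary subtlety you raise is phrased slightly backwards but is in any case harmless. It is $h$, not the test functions, that vanishes on $\partial_{\mathrm w}\Omega_{2r}$; the test functions used in the boundary Calder\'on--Zygmund comparison steps lie in $W^{1,p}_0(\Omega_\rho(y))$ for sub-balls, hence are limits of functions in $C^\infty_c(\Omega_\rho(y))\subset C^\infty_c(\Omega_{2r})$. For such $\varphi$ the identity $\int f\varphi=-\int F\cdot D\varphi$ holds with no boundary term because $\mathrm{div}\,F=f$ globally on $\R^n$. So the divergence-form weak formulation in $\Omega_{2r}$ is genuinely equivalent to the original one, and the machinery applies without modification.

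In short: \cite{LO1} builds the non-divergence estimate from scratch via an adapted Calder\'on--Zygmund iteration; your route is more elementary in that it piggybacks on the already-established divergence-form theory at the cost of a global potential representation. Both reach the same endpoint with the same dependence of $\delta$ and $c$ on $(n,p,L,\nu,\gamma)$.
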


%
%\begin{theorem}\label{thmDwbdd1}
%Let $n<p<\infty$, $p<q\leq \frac{(p-1)n}{n-1}$ and $1<\tilde q <n$. There exists a small $\delta = \delta(n, L, \nu, p, q)\in (0,\frac18) $ so that if $\mathbf{a}$ is $( \delta, R)$-vanishing, $\Omega$ is a $(\delta,R)$-Reifenberg flat for some $R\in(0,1)$, then for any $x_0\in\overline\Omega$, $r\in(0,\frac{R}{2}]$ and for any weak solution $v\in W^{1,p}(\Omega_{2r}(x_0))$ to  \eqref{homoeqfF} with $f\in L^{\tilde q}(\Omega_{2r}(x_0))$, we have 
%\begin{eqnarray*}
%\left(\mint_{\Omega_r(x_0)} |Dv|^{q} \, dx\right)^{\frac{1}{q}}   &\leq& c \left( \mint_{\Omega_{2r}(x_0)} |Dv|^{p}\, dx\right)^{\frac{1}{p}}+ c \left(   \mint_{\Omega_{2r}(x_0)} |r f  |^{\tilde q} \, dx      \right)^{\frac{1}{\tilde q(p-1)}}
%\end{eqnarray*}
%for some constant $c=c(n,  L,\nu, p,q, \tilde q)>0.$
%\end{theorem}
%

We need the standard iteration lemma whose proof can be found in, for instance, \cite{HL1}. 
\begin{lemma}\label{teclem}
Let $ g :[a,b] \to \R $ be a bounded nonnegative function. Suppose that for any $\tau_1,\tau_2$ with  $ 0< a \leq \tau_1 < \tau_2 \leq b $,
$$
g(\tau_1) \leq \tau g(\tau_2) + \frac{C_1}{(\tau_2-\tau_1)^{\beta}}+C_2
$$
where $C_1,C_2 \geq 0, \beta >0$ and $0\leq \tau <1$. Then we have
$$
g(\tau_1) \leq c\left(  \frac{C_1}{(\tau_2-\tau_1)^{\beta}}+ C_2 \right)
$$
for some constant $c=c(\beta, \tau) >0$.
\end{lemma}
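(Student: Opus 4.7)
The plan is to iterate the hypothesis along a geometrically spaced sequence of points converging to $\tau_2$. First I would fix $\tau_1,\tau_2$ as in the statement and choose a parameter $\lambda \in (\tau^{1/\beta},1)$, depending only on $\beta$ and $\tau$, so that $\tau\lambda^{-\beta}<1$. Then I set $t_0 := \tau_1$ and $t_{i+1} := t_i + (1-\lambda)\lambda^i(\tau_2-\tau_1)$, which produces a strictly increasing sequence in $[\tau_1,\tau_2]$ with $t_i \nearrow \tau_2$ and gap $t_{i+1}-t_i = (1-\lambda)\lambda^i(\tau_2-\tau_1)$.

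Applying the hypothesis to the admissible pair $(t_i,t_{i+1})$ gives
\begin{equation*}
g(t_i) \le \tau\, g(t_{i+1}) + \frac{C_1}{(1-\lambda)^\beta \lambda^{i\beta}(\tau_2-\tau_1)^\beta} + C_2,
\end{equation*}
and iterating this inequality $k$ times yields
\begin{equation*}
g(\tau_1) \le \tau^k g(t_k) + \frac{C_1}{(1-\lambda)^\beta (\tau_2-\tau_1)^\beta} \sum_{i=0}^{k-1}(\tau\lambda^{-\beta})^i + C_2 \sum_{i=0}^{k-1}\tau^i.
\end{equation*}
By the choice of $\lambda$, both geometric series converge as $k\to\infty$, and since $g$ is bounded and $0\le \tau<1$, we have $\tau^k g(t_k) \to 0$. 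Passing to the limit delivers the claimed bound with
\begin{equation*}
c = \max\!\left\{ \frac{1}{(1-\lambda)^\beta (1-\tau\lambda^{-\beta})},\; \frac{1}{1-\tau} \right\},
\end{equation*}
which depends only on $\beta$ and $\tau$ via the choice of $\lambda$.

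There is no serious obstacle: the only quantitative decision is picking $\lambda \in (\tau^{1/\beta},1)$, which is precisely what makes the series $\sum(\tau\lambda^{-\beta})^i$ summable. After that the estimate is a straightforward telescoping/geometric computation, and the boundedness of $g$ enters only to ensure that the remainder term $\tau^k g(t_k)$ vanishes in the limit.
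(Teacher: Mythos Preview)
Your argument is correct and is precisely the standard geometric iteration proof of this lemma. The paper does not give its own proof but refers to \cite{HL1}, where exactly this argument (choosing a ratio $\lambda\in(\tau^{1/\beta},1)$, iterating along the geometrically spaced points $t_i$, and summing the resulting geometric series) is carried out; so your approach coincides with the intended one.
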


Now , we consider the following homogeneous
Dirichlet problems of the following type
\begin{equation}\label{eqh}
\left\{\begin{array}{rclll}
-\mathrm{div}\, \mathbf{a}(x,Dh) + V |h|^{s-2}h&  =  &0 & \textrm{in } \ \Omega_{r},&  \\
h& = & 0 & \textrm{on } \   \partial_{\mathrm{w}}\Omega_{r}, \ \text{ if }\ B_{r}\not\subset\Omega,\end{array}\right.
\end{equation}
where $1<p<\infty$, $1<s<p^*$, $\mathbf{a}:\R^n\times \R^n\to\R^n$ satisfies
\begin{equation}\label{aAss}
|\ba(x,\xi)|\leq L|\xi|^{p-1} \ \ \text{and}\ \ \ba(x,\xi)\cdot \xi\geq \nu |\xi|^p, \quad x,\xi \in\R^n,
\end{equation}
for some $0<\nu\leq L$, and $V:\R^n\to [0,\infty)$ does $V\in L^{\gamma_0}(\Omega)$. Note that the assumptions \eqref{aas1} and \eqref{aas2} imply \eqref{aAss}.
%Note that for $h\in W^{1,p}(\Omega_r)$, the boundary condition that $h=0$ on $\partial_{w}\Omega_r$ means that  the zero extension $\bar h$ of $h$ in $B_r$ belongs to $W^{1,p}(B_r)$.

We shall need the following Caccioppoli type estimates. For simplicity, we use both the plus-minus sign $\pm$ and the minus-plus sign $\mp$, and they are all linked, that is, we take all upper signs or all lower signs. 

\begin{lemma}\label{lem:caccio} (Caccioppoli estimates)
Under the setting above, let $h\in W^{1,p}(\Omega_{r})$ be a weak solution  to \eqref{eqh}.
Then for every $k\ge 0$, every $B_\rho(y)\subset B_r$ with $B_\rho(y)\cap \Omega_r\neq \emptyset$, and every $\nu\in(0,1)$, we have
\begin{equation}\label{caccio1}
%\int_{\Omega_{\nu \rho}(y) }  | D(u \mp k)_{\pm} |^p \,dx + \int_{\Omega_{\nu \rho}(y) } V|u|^{s-2}u_\pm  (u \mp k)_{\pm} \,dx \le c \int_{\Omega_{\rho}(y)} \left[\frac{(u \mp k)_{\pm}}{(1-\nu) r}\right]^p\,dx 
\int_{\Omega_{\nu \rho}(y) }  | D(h \mp k)_{\pm} |^p \,dx + \int_{\Omega_{\nu \rho}(y) } V|h|^{s-2}h_\pm  (h \mp k)_{\pm} \,dx \le c \int_{\Omega_{\rho}(y)} \left[\frac{(h \mp k)_{\pm}}{(1-\nu) r}\right]^p\,dx 
\end{equation}
for some c$\tilde\gamma<\gamma<n,$ where $\tilde\gamma$ is given in \eqref{gamma}. onstant $c = c(n, p, L, \nu) >0.$
% where $(h \mp k)_{\pm}= (h - k)_{+}$ or $(h + k)_{-}$.
% In particular,
%\begin{equation}\label{caccio2}
%\int_{\Omega_{\nu r}} \big[|Du|^p+V|u|^{s}\big]\,dx
% \le c \int_{\Omega_{r}} \left[\frac{|u|}{(1-\nu)r}\right]^p\,dx + \int_{\Omega_{r}} |F|^{p}\,dx . 
%\end{equation}
%Here, $B_r$ can be replaced by $Q_r$. 
\end{lemma}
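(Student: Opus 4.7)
The plan is to prove the Caccioppoli estimate by testing the weak formulation of \eqref{eqh} with the standard truncation test function
\begin{equation*}
\varphi = \eta^p (h \mp k)_{\pm},
\end{equation*}
where $\eta \in C_c^{\infty}(B_\rho(y))$ is a cutoff with $\eta \equiv 1$ on $B_{\nu\rho}(y)$, $0 \le \eta \le 1$, and $|D\eta| \le 2/((1-\nu)\rho)$ (I read the ``$r$'' on the right-hand side of the statement as a typo for $\rho$). Admissibility of $\varphi$ needs to be checked: since $k \ge 0$, the truncations $(h \mp k)_{\pm}$ vanish on $\{h = 0\}$, so $\varphi$ extends by zero across the flat part $\partial_{\mathrm{w}}\Omega_r$ when $B_\rho(y)$ touches the boundary, and $\eta$ takes care of the rest; hence $\varphi \in W_0^{1,p}(\Omega_r)$.

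Next, I would substitute $\varphi$ into the weak form
\begin{equation*}
\int_{\Omega_r} \mathbf{a}(x,Dh)\cdot D\varphi\,dx + \int_{\Omega_r} V|h|^{s-2}h\,\varphi\,dx = 0,
\end{equation*}
expand $D\varphi = p\eta^{p-1}(h\mp k)_{\pm}\,D\eta \pm \eta^p D(h\mp k)_{\pm}$, and split the resulting identity into three groups. On the support of $(h\mp k)_{\pm}$ one has $Dh = \pm D(h\mp k)_{\pm}$, so the ellipticity half of \eqref{aAss} yields
\begin{equation*}
\int \eta^p \mathbf{a}(x,Dh)\cdot D(h\mp k)_{\pm}\,dx \ge \nu \int \eta^p |D(h\mp k)_{\pm}|^p\,dx
\end{equation*}
(with signs absorbed correctly). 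On the same support, $h$ has the sign $\pm$, so $|h|^{s-2}h\,(h\mp k)_{\pm} = |h|^{s-2}h_{\pm}(h\mp k)_{\pm}$ produces the claimed nonnegative lower-order contribution. The remaining ``cross'' term $p\int \eta^{p-1}(h\mp k)_{\pm}\,D\eta\cdot \mathbf{a}(x,Dh)\,dx$ is bounded via the growth half of \eqref{aAss}, $|\mathbf{a}(x,Dh)| \le L|Dh|^{p-1} = L|D(h\mp k)_{\pm}|^{p-1}$, by
\begin{equation*}
pL \int \eta^{p-1}|D\eta|(h\mp k)_{\pm}|D(h\mp k)_{\pm}|^{p-1}\,dx.
\end{equation*}

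Applying Young's inequality with exponents $p/(p-1)$ and $p$ then gives
\begin{equation*}
pL\,\eta^{p-1}|D\eta|(h\mp k)_{\pm}|D(h\mp k)_{\pm}|^{p-1} \le \varepsilon\,\eta^p|D(h\mp k)_{\pm}|^p + C(p,L,\varepsilon)\,|D\eta|^p(h\mp k)_{\pm}^p.
\end{equation*}
Choosing $\varepsilon = \nu/2$ absorbs the gradient term into the left-hand side, and restricting the left-hand integrals to $B_{\nu\rho}(y)$, where $\eta \equiv 1$, together with the estimate $|D\eta|\le 2/((1-\nu)\rho)$, produces the desired inequality.

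The one delicate point is the sign bookkeeping in the lower-sign case. There one has $D(h+k)_{-} = -Dh\,\chi_{\{h+k<0\}}$, and on $\{h+k<0\}$ we have $h < 0$, so $|h|^{s-2}h = -(h_{-})^{s-1}$; the two sign flips are compatible and together feed the lower-order term into the left-hand side with the correct (positive) orientation, so the combined statement covering both $\pm$ cases is produced simultaneously. Beyond this, the argument is routine Moser-type testing and requires no Sobolev embedding or Fefferman--Phong input.
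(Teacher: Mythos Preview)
Your proof is correct and follows essentially the same route as the paper: test the weak form with the truncation times a cutoff, use the coercivity and growth parts of \eqref{aAss} on the support of $(h\mp k)_\pm$, and finish with Young's inequality. The only cosmetic difference is that the paper takes $\varphi=\pm(h\mp k)_\pm\eta^p$ with the extra sign in front, which makes the lower-sign bookkeeping automatic instead of deferred to your final paragraph; your reading of the $(1-\nu)r$ on the right-hand side as $(1-\nu)\rho$ is also the correct one for the cutoff you chose.
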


\begin{proof}
%Note that the second estimate \eqref{caccio2} follows from \eqref{caccio1} when $k=0$. We prove \eqref{caccio1}. 
We take $\pm(h \mp k)_{\pm}\eta^p$  as a test function in the weak form of \eqref{eqh}, where $\eta\in C^\infty_0(B_\rho(y))$ such that $0\le \eta\le 1$, $\eta\equiv 1$ on $B_{\nu \rho}(y)$, and $|D\eta|\le \frac{c(n)}{(1-\nu)r}$. Note that $\pm(h \mp k)_{\pm}\eta^p=0$ on $\partial_{\mathrm w}\Omega_r$ since $h=0$ on $\partial_{\mathrm w}\Omega_r$ and $k\ge 0$. Then we have
\[
\begin{split}
\int_{\Omega_{\rho}(y) }  \ba(x,Dh) \cdot  D[\pm (h \mp k)_{\pm}\eta^p ]  \,dx + \int_{\Omega_{ \rho}(y) } V|h|^{s-2}h [\pm   (h \mp k)_{\pm} \eta^{p}] \,dx =0. 
\end{split}
\]
Note that since $k\ge0$,  $Dh=\pm D(h \mp k)_{\pm}$ and $\pm h=h_{\pm}$ when $(h \mp k)_{\pm} >0$.  Moreover, applying  \eqref{aAss}, we find
\[\begin{split}
 \ba(x,Dh) \cdot  D[\pm (h \mp k)_{\pm} \eta^p ]  &= \pm\ba(x,\pm D(h \mp k)_{\pm}) \cdot  \left[\eta^p D(h \mp k)_{\pm} + (h \mp k)_{\pm} D\eta^p  \right]\\
 &\ge   \nu \eta^p|D(h \mp k)_{\pm}|^p - c \eta^{p-1} |D(h \mp k)_{\pm}|^{p-1} \frac{(h \mp k)_{\pm}}{(1-\nu)r}.
\end{split}\] 
Therefore, by the above results and the second inequality in \eqref{aAss}, we have
\[
\begin{split}
\nu \int_{\Omega_{\rho}(y) } \eta^p|D(h \mp k)_{\pm}|^p  \,dx + &\int_{\Omega_{\rho}(y) } V|h|^{s-2}h_\pm   (h \mp k)_{\pm} \eta^{p}\,dx\\
& \le c \int_{\Omega_{\rho}(y)} \eta^{p-1} |D(h \mp k)_{\pm}|^{p-1} \frac{(h \mp k)_{\pm}}{(1-\nu)r}\,dx.
\end{split}
\]
Using Young's inequality and the properties of $\eta$, we have the estimate \eqref{caccio1}.
\end{proof}

Next, we prove the local boundedness of the weak solutions to \eqref{eqh} with $L^\infty-L^q_w$ type estimates.
%{\color{magenta} weight $w\in A_t$는 $h$가 만족하는 식에 나오는 $V$ 와 상관없이 만족합니다. 물론 우리는 $w=V$ 인 경우에 사용합니다.}
\begin{lemma}\label{lem:suph}
Under the assumptions of Lemma~\ref{lem:caccio}, 
%let  $h\in W^{1,p}(\Omega_{r})$ be a weak solution to \eqref{eqh}. Then 
we have that $h\in L^\infty_{\mathrm{loc}}(B_r)$, where we extend $h$ to $B_r$ by $0$. Moreover, for every $0< r_1 < r_2 \le r$, every $q>0$ and every weight $w\in A_t$ with $t\ge1$,
\begin{equation}\label{Linftyestimate}
\begin{split}
\|h\|_{L^\infty(\Omega_{r_1})} & \leq c \left(\frac{r_2}{r_2-r_1}\right)^{\frac{nt}{q}} \left(\frac{1}{w(B_{r_2})} \int_{\Omega_{r_2}} |h|^q w \, dx \right)^{\frac{1}{q}}\\
&=c \left(\frac{r_2}{r_2-r_1}\right)^{\frac{nt}{q}} \left(\frac{1}{(w)_{B_{r_2}}} \frac{1}{|B_{r_2}|}\int_{\Omega_{r_2}} |h|^q   w \, dx \right)^{\frac{1}{q}}
\end{split}
\end{equation}
for some constant $c = c(n, p, L, \nu,q,t, [w]_{t}) >0$, where $w(B_{r_2}) := \int_{B_{r_2}} w \,dx.$
\end{lemma}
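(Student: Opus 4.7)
The plan is to establish an unweighted $L^\infty$--$L^{q_0}$ bound via a De Giorgi--Moser iteration based on the Caccioppoli inequality of Lemma~\ref{lem:caccio}, and then convert it to the weighted form using the definition of the $A_t$ class.

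For the iteration, I would first discard the nonnegative potential term in \eqref{caccio1}, which leaves the pure gradient estimate
\[
\int_{\Omega_{\nu\rho}(y)} |D(h \mp k)_{\pm}|^p \, dx \leq \frac{c}{[(1-\nu)\rho]^p}\int_{\Omega_{\rho}(y)} (h \mp k)_{\pm}^p \, dx
\]
for every $k \geq 0$ and every $B_{\rho}(y) \subset B_r$. Because $h$ is extended by zero to $B_r$ and $k\geq 0$, both truncations $(h \mp k)_{\pm}$ also vanish on $B_r \setminus \Omega_r$, so they lie in $W^{1,p}(B_r)$ and the Sobolev embedding can be applied on the full ball rather than on $\Omega_{\rho}(y)$. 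Combining Caccioppoli with a standard cutoff and the Sobolev embedding, with exponent $\chi := p^*/p$ when $p<n$ (and any fixed $\chi>1$ from a Morrey--Sobolev embedding otherwise), yields a reverse-H\"older-type inequality
\[
\left(\mint_{\Omega_{\nu\rho}(y)} (h \mp k)_{\pm}^{p\chi} \, dx\right)^{1/\chi} \leq \frac{c}{(1-\nu)^p}\mint_{\Omega_{\rho}(y)} (h \mp k)_{\pm}^p \, dx
\]
on concentric balls. A standard geometric iteration with shrinking radii and increasing levels then produces, separately for $h_+$ and $h_-$, an unweighted bound
\[
\|h\|_{L^{\infty}(\Omega_{r_1})} \leq c\left(\frac{r_2}{r_2-r_1}\right)^{n/q_0} \left(\mint_{\Omega_{r_2}} |h|^{q_0}\, dx\right)^{1/q_0}
\]
for every $q_0 \geq p$. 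Extending this estimate to an arbitrary $q_0 > 0$ is classical: one factors $|h|^p = |h|^{q_0}\cdot |h|^{p-q_0}$, pulls out a power of $\|h\|_{L^{\infty}(\Omega_{s_2})}$ on an intermediate radius $s_2$ via Young's inequality, and then absorbs the supremum using Lemma~\ref{teclem} applied to $s \mapsto \|h\|_{L^{\infty}(\Omega_s)}$, obtaining the correct exponent $n/q_0$.

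Next, given $q>0$ and $w \in A_t$ with $t>1$, I would apply the unweighted bound with $q_0 := q/t$. H\"older's inequality with exponents $t$ and $t/(t-1)$ gives
\[
\mint_{\Omega_{r_2}} |h|^{q/t}\, dx \leq \left(\mint_{\Omega_{r_2}} |h|^q w \, dx\right)^{1/t} \left(\mint_{B_{r_2}} w^{-1/(t-1)} \, dx\right)^{(t-1)/t},
\]
while the definition of $[w]_t$ yields $\bigl(\mint_{B_{r_2}} w^{-1/(t-1)}\, dx\bigr)^{t-1} \leq [w]_t |B_{r_2}|/w(B_{r_2})$. Combining these bounds produces
\[
\left(\mint_{\Omega_{r_2}} |h|^{q/t}\, dx\right)^{t/q} \leq [w]_t^{1/q}\left(\frac{1}{w(B_{r_2})}\int_{\Omega_{r_2}}|h|^q w \, dx\right)^{1/q},
\]
and feeding this into the unweighted $L^{\infty}$--$L^{q/t}$ bound from the previous step gives the claimed estimate. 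The degenerate case $t=1$ is handled directly: $w\in A_1$ implies $w^{-1} \leq [w]_1/(w)_{B_{r_2}}$ a.e.\ on $B_{r_2}$, so applying the unweighted bound with $q_0 = q$ and inserting the pointwise inequality yields the same conclusion.

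The main obstacle is the Moser iteration step: one must carefully track the blow-up of the constant in $r_2/(r_2-r_1)$ across the geometric chain of radii so that the final exponent is exactly $n/q_0$, and then extend from $q_0 \geq p$ to arbitrary $q_0 > 0$ via the Young/Lemma~\ref{teclem} self-improvement, which is classical but needs a delicate bookkeeping of scales. The weight conversion in the final step is then essentially a clean application of H\"older and the $A_t$ definition.
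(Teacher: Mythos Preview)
Your proposal is correct and follows the same two-step strategy as the paper: first obtain the unweighted $L^\infty$--$L^{q_0}$ bound for all $q_0>0$ from the Caccioppoli inequality (the paper simply observes that $h$ lies in the De Giorgi class and cites \cite[Chapter~7.2]{Gi1}, while you spell out the Moser iteration and the self-improvement via Lemma~\ref{teclem}), and then convert to the weighted form by applying the unweighted bound with exponent $q/t$ together with the $A_t$ inequality (the paper cites \cite[Proposition~9.1.5~(8)]{G1}, which is exactly your H\"older computation).
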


\begin{proof}
Since $V\ge 0$, 
%in  \eqref{caccio1}, 
we have from  \eqref{caccio1} that 
 for every $B_\rho(y)\subset B_r$, $k\ge 0$, and $\nu\in(0,1)$, 
\begin{equation*}%\label{caccio11}
\begin{split}
\int_{\Omega_{\nu \rho}(y) }  | D(h \mp k)_{\pm} |^p \,dx  \le c \int_{\Omega_{\rho}(y)} \left[\frac{(h \mp k)_{\pm}}{(1-\nu) r}\right]^p\,dx.
\end{split}\end{equation*}
This implies that  $h$ belongs to the De Giorgi class in \cite[Chapter 7]{Gi1}. Therefore, in view of \cite[Chapter 7.2]{Gi1}, we deduce that 
\begin{equation}\label{Linftyestimate1}
\|h\|_{L^\infty(\Omega_{r_1})} \leq c \left(\frac{r_2}{r_2-r_1}\right)^{\frac{n}{q}}\left( \frac{1}{|B_{r_2}|}\int_{\Omega_{r_2}} |h|^q \, dx \right)^{\frac{1}{q}}
\end{equation}
for every $q>0$, which is the desired estimate \eqref{Linftyestimate} with the trivial weight $w\equiv 1$.
%If $w\in A_\infty$, for some $t\in [1,\infty)$ 
Since $w\in A_t$, we note from \cite[Proposition 9.1.5 (8)]{G1} that for every $B_\rho\subset \R^n$ and every $f\in L^{t}_w(B_{\rho})$,
\[
\mint_{B_{\rho}} |f| \,dx \le [w]_t \left(\frac{1}{w(B_\rho)} \int_{B_\rho} |f|^t w \, dx \right)^{\frac{1}{t}}.
\]  
Therefore, plugging this inequality with $f=|h|^{q/t}$ and $B_\rho=B_{r_2}$ into  \eqref{Linftyestimate1} replacing $q$ by $q/t$, we obtain \eqref{Linftyestimate}.
%for general $w\in A_\infty$.
\end{proof}

Using the above results, we deduce the following lemma, which allows us to change the integrangd $V|h|^s$ to $|Dh|^p$ later.

\begin{lemma}\label{lem:VhDh}
%Under the setting in the beginning of the section, 
Under the assumptions of Lemma~\ref{lem:caccio},
let  $h\in W^{1,p}(\Omega_{r})$ be a weak solution to \eqref{eqh}. If $V\in B_\gamma$ for some $\gamma >1$, then
\begin{equation}\label{VhDhestimate}
r^\frac{p}{p-1}(V)_{B_{r/2}}^{\frac{p}{s(p-1)}} \left(\frac{1}{|B_{r/2}|}\int_{\Omega_{r/2}}V|h|^s\,dx\right)^{\frac{p(s-1)}{s(p-1)}} \le \frac{c}{|B_r|} \int_{\Omega_{r}}|Dh|^p\,dx
\end{equation}
for some $c=c(n,p,L,\nu,s,\gamma,b_\gamma)>0$.
\end{lemma}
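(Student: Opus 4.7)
The plan is to combine a test-function estimate for the weak form of \eqref{eqh} with the weighted $L^{\infty}$ bound from Lemma~\ref{lem:suph}, and then close the loop using Young's inequality and the iteration Lemma~\ref{teclem}. For radii $r/2\le\rho_1<\rho_3\le r$, set $\rho_2=(\rho_1+\rho_3)/2$ and pick $\eta\in C_c^\infty(B_{\rho_2})$ with $\eta\equiv 1$ on $B_{\rho_1}$ and $|D\eta|\le c/(\rho_3-\rho_1)$. Inserting $h\eta^p$ in the weak form of \eqref{eqh}, using $\ba(x,Dh)\cdot Dh\ge 0$ and $|\ba(x,Dh)|\le L|Dh|^{p-1}$ from \eqref{aAss}, and applying H\"older with exponents $p/(p-1)$ and $p$ yields
\begin{equation*}
\int_{\Omega_{\rho_1}}V|h|^s\,dx \le \frac{c\,|B_r|^{1/p}}{\rho_3-\rho_1}\,\|h\|_{L^{\infty}(\Omega_{\rho_2})}\Bigl(\int_{\Omega_r}|Dh|^p\,dx\Bigr)^{(p-1)/p}.
\end{equation*}

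By \eqref{equivalence}, $V\in\mathcal B_\gamma$ implies $V\in A_t$ for some $t$ with constants determined by $\gamma,b_\gamma$, so Lemma~\ref{lem:suph} with $w=V$, $q=s$, $r_1=\rho_2$, $r_2=\rho_3$ gives $\|h\|_{L^\infty(\Omega_{\rho_2})}^s \le c(\rho_3/(\rho_3-\rho_2))^{nt}V(B_{\rho_3})^{-1}\int_{\Omega_{\rho_3}}V|h|^s\,dx$. Plugging this into the previous display and using doubling of $V\,dx$ (a consequence of $V\in A_t$) to replace $V(B_{\rho_3})$ by $cV(B_r)$, and writing $\phi(\rho):=\int_{\Omega_\rho}V|h|^s\,dx$, we get
\begin{equation*}
\phi(\rho_1) \le \widetilde C\,(\rho_3-\rho_1)^{-(1+nt/s)}\phi(\rho_3)^{1/s},\qquad r/2\le\rho_1<\rho_3\le r,
\end{equation*}
where $\widetilde C = c\, r^{nt/s}|B_r|^{1/p}V(B_r)^{-1/s}\bigl(\int_{\Omega_r}|Dh|^p\,dx\bigr)^{(p-1)/p}$. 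Since $1/s<1$, Young's inequality converts this into $\phi(\rho_1)\le\varepsilon\phi(\rho_3)+c_\varepsilon \widetilde C^{s/(s-1)}(\rho_3-\rho_1)^{-\beta}$ with $\beta=(1+nt/s)s/(s-1)$; taking $\varepsilon$ small, Lemma~\ref{teclem} (applied with $\tau_1=r/2$, $\tau_2=r$) then yields $\phi(r/2)\le c\,\widetilde C^{s/(s-1)}r^{-\beta}$.

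Unpacking $|B_r|\sim r^n$ and $V(B_r)\sim r^n(V)_{B_{r/2}}$, the $t$-dependent exponents cancel and the remaining power of $r$ collapses to $-s/(s-1)$; raising the resulting inequality to the power $(s-1)/s$ and then to $p/(p-1)$ produces exactly \eqref{VhDhestimate}. The main obstacle is the sublinear dependence $\phi(\rho_3)^{1/s}$ in the combined estimate: direct iteration is impossible, but Young's inequality (available precisely because $1/s<1$) absorbs a small multiple of $\phi(\rho_3)$, producing the form required by Lemma~\ref{teclem}. Equally crucial is the doubling of $V\,dx$, which decouples $V(B_{\rho_3})$ from the varying radius $\rho_3$; without it the $\rho_3$-dependence inside $V(B_{\rho_3})^{-1/s}$ would obstruct the iteration.
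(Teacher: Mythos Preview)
Your proof is correct and follows essentially the same route as the paper: test \eqref{eqh} with $h\eta^p$, control $\|h\|_{L^\infty}$ via the weighted estimate of Lemma~\ref{lem:suph} with $w=V$, linearize by Young's inequality, and close with Lemma~\ref{teclem}. The only organizational difference is that the paper premultiplies the basic inequality by the fixed constant $M=[r^p(V)_{B_{r/2}}^{p/s}(\mmint_{B_{r/2}}V|h|^s)^{(s-p)/s}]^{1/(p-1)}$ so that the iterated quantity is $M\phi$ and Young's inequality is applied with exponent $p$, whereas you iterate $\phi$ directly and use Young with exponent $s$; the two bookkeeping choices are interchangeable and yield the same estimate.
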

\begin{proof}
We first note from \eqref{equivalence} that $V\in A_t$ with $t\ge 1$ and $[V]_t$ depending on $\gamma$ and $b_\gamma$.
We extend $h$ to $B_{r}$ by $0$. Let $r/2\le r_1 < r_2 \le r$ be arbitrary, $r_3:=\frac{r_1+r_2}{2}$, and $\eta\in C^\infty_0(B_{r_3})$ with $\eta\ge0$, $\eta\equiv 1$ in $B_{r_1}$ and $|D\eta|\le c/(r_2-r_1)$. We take $h\eta^p$ as a test function in the weak form of \eqref{eqh}. Then, in the same argument as in the proof of Lemma~\ref{lem:caccio} we have
\[
  \nu \int_{B_{r_3}}   |Dh |^p\eta^p \,dx + \int_{B_{r_3}}  V|h|^{s}\eta^p \,dx \le  \frac{c}{r_2-r_1} \int_{B_{r_3}} \eta^{p-1}|Dh|^{p-1}|h| \,dx.
\]
Applying H\"older's inequality, it follows that
\[
\int_{B_{r_1 }}  V|h|^{s} \,dx  \le  \frac{c}{r_2-r_1}\left(\int_{B_{r_3}} |Dh|^p \,dx\right)^{\frac{p-1}{p}}  \left(\int_{B_{r_3}} |h|^p \,dx\right)^{\frac{1}{p}} .
\]
We multiply the both sides with 
\[
M:= \left[r^{p} (V)_{B_{r/2}}^{\frac{p}{s}} \left(\mint_{B_{r/2}} V|h|^s\,dx\right)^{\frac{s-p}{s}}\right]^{\frac{1}{p-1}}
\]
to obtain
\[\begin{split}
M\int_{B_{r_1 }}  V|h|^{s} \,dx & \le   \frac{c\,rM^{\frac{1}{p}}}{r_2-r_1}  \left(\int_{B_{r_3}} |Dh|^p \,dx\right)^{\frac{p-1}{p}} \\
&\qquad \times \Bigg[\underbrace{(V)_{B_{r/2}}^{\frac{p}{s}}\left(\mint_{B_{r/2}} V|h|^s\,dx\right)^{\frac{s-p}{s}}\int_{B_{r_3}} |h|^p \,dx}_{=:I}\Bigg]^{\frac{1}{p}} .
\end{split}\]
(Here, we assume that $(V|h|^s)_{B_{r/2}}>0$. If the average is zero, the estimate \eqref{VhDhestimate} is trivial.)
We now estimate $I$. By \eqref{Linftyestimate} with $(q,r_1,w)$ in place of $(s,r_3,V)$, 
\[\begin{split}
I &\le |B_r|(V)_{B_{r/2}}^{\frac{p}{s}}\left(\mint_{B_{r/2}} V|h|^s\,dx\right)^{\frac{s-p}{s}}\|h\|^p_{L^\infty(B_{r_3})}\\
&\le c |B_r|\left(\frac{r}{r_2-r_1}\right)^{\frac{npt}{s}}(V)_{B_{r/2}}^{\frac{p}{s}}\left(\mint_{B_{r/2}} V|h|^s\,dx\right)^{\frac{s-p}{s}}\left(\frac{1}{(V)_{B_{r_2}}}\mint_{B_{r_2}} V|h|^{s}\, dx\right)^{\frac{p}{s}}\\
&\le c \left(\frac{r}{r_2-r_1}\right)^{\frac{npt}{s}}\int_{B_{r_2}} V|h|^{s}\,dx.
\end{split}\]
Inserting this into the preceding estimate and using Young's inequality, we have
\[\begin{split}
M\int_{B_{r_1 }}  V|h|^{s} \,dx & \le  c \left(\frac{r}{r_2-r_1}\right)^{1+\frac{nt}{s}}\left(\int_{B_{r}} |Dh|^p \,dx\right)^{\frac{p-1}{p}}  \Bigg[M\int_{B_{r_2}}  V|h|^{s} \,dx\Bigg]^{\frac{1}{p}}\\
&  \le \frac{1}{2} M\int_{B_{r_2}}  V|h|^{s} \,dx +c \left(\frac{r}{r_2-r_1}\right)^{(1+\frac{nt}{s})\frac{p}{p-1}}\int_{B_{r}} |Dh|^p \,dx.
\end{split}\]
Theorefore, by applying Lemma~\ref{teclem} and recalling the definition of $M$, we get the conclusion.
\end{proof}

Finally, we derive the following two reverse H\"older type higher integrability results for homogeneous equations.

\begin{theorem}\label{lem:estimatehomo0}
Let $1<p<\infty$, $1<s<p^*$, $\gamma_0$ be from \eqref{gamma0},  $\ba:\R^n\times \R^n\to\R^n$ satisfy \eqref{aAss}, and $V:\R^n\to [0,\infty)$ do $V\in L^{\gamma_0}(\Omega)$ and $V\in \mathcal{B}_{\gamma}$ for some  $\gamma>1$. 
If $h\in  W^{1,p}(\Omega_{2r}(x_0))$ is a weak solution to
\[
\left\{\begin{array}{rclcl}
-\mathrm{div}\, \mathbf{a}(x,Dh) + V |h|^{s-2}h&  =  &0 & \textrm{ in } & \Omega_{2r}(x_0),  \\
h & = & 0 & \textrm{ on } &  \partial_{\mathrm{w}}\Omega_{2r}(x_0)\ \text{ if }\ B_{2r}(x_0)\not\subset\Omega,\end{array}\right.
\]
where $x_0\in\overline{\Omega}$, then 
% $V|h|^s\in L^{\gamma}(\Omega_r(x_0))$ with the estimate
we have 
\begin{equation*}%\label{DwDwVwest1}
\left( \mint_{{\Omega}_r(x_0)} \left[ V|h|^s \right]^{\gamma }  \, dx \right)^{\frac{1}{\gamma }}  \leq  c  \mint_{\Omega_{2r}(x_0)} V |  h |^s  \,dx
\end{equation*}
for some $c=c(n,p,\nu,L,s,\gamma,b_\gamma)>0$.
\end{theorem}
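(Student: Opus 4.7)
The plan is to deduce the reverse H\"older inequality for $V|h|^s$ directly from the weighted $L^\infty$ estimate in Lemma~\ref{lem:suph} applied with the weight $w=V$, together with the reverse H\"older property $V\in\mathcal B_\gamma$; no further nontrivial ingredient is needed.

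First I would observe that $V\in\mathcal B_\gamma$ implies $V\in A_t$ for some $t\ge 1$ with $[V]_t$ controlled by $\gamma$ and $b_\gamma$ via \eqref{equivalence}. Extending $h$ to $B_{2r}(x_0)$ by zero where necessary, I apply Lemma~\ref{lem:suph} on $\Omega_{2r}(x_0)$ with $q=s$, $w=V$, $r_1=r$, $r_2=2r$, obtaining
$$
\|h\|_{L^\infty(\Omega_r(x_0))}^s \le c\,\frac{1}{(V)_{B_{2r}(x_0)}}\mint_{\Omega_{2r}(x_0)}V|h|^s\,dx,
$$
where $c$ depends only on $n,p,L,\nu,s,\gamma,b_\gamma$.

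Next I would estimate the left-hand side of the target inequality in the obvious way: pulling out $\|h\|_{L^\infty}^{s\gamma}$ and using the reverse H\"older inequality $V\in\mathcal B_\gamma$ on $B_r(x_0)$,
$$
\int_{\Omega_r(x_0)} V^\gamma|h|^{s\gamma}\,dx \le \|h\|_{L^\infty(\Omega_r(x_0))}^{s\gamma}\int_{B_r(x_0)} V^\gamma\,dx \le b_\gamma^{\gamma}|B_r|(V)_{B_r(x_0)}^\gamma\|h\|_{L^\infty(\Omega_r(x_0))}^{s\gamma}.
$$
Dividing by $|\Omega_r(x_0)|$, using the measure density bound $|B_r|/|\Omega_r(x_0)|\le c(n)$ from \eqref{dencon}, taking $\gamma$-th roots, and inserting the sup bound from the previous display, I arrive at
$$
\left(\mint_{\Omega_r(x_0)}(V|h|^s)^\gamma\,dx\right)^{1/\gamma} \le c\,\frac{(V)_{B_r(x_0)}}{(V)_{B_{2r}(x_0)}}\mint_{\Omega_{2r}(x_0)}V|h|^s\,dx.
$$
Since $V\ge 0$ and $B_r(x_0)\subset B_{2r}(x_0)$ force $V(B_r(x_0))\le V(B_{2r}(x_0))$, we get $(V)_{B_r(x_0)}\le 2^n (V)_{B_{2r}(x_0)}$, and the ratio is absorbed into the constant.

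The hard part is really Lemma~\ref{lem:suph}; once that is in hand the present reverse H\"older type estimate is essentially a one-line algebraic consequence. The conceptual novelty is that the weighted sup estimate with weight $V$ encodes the Schr\"odinger-type localization of $h$ in a way that replaces the Fefferman--Phong inequality used in the linear treatments of \cite{AB1,Sh1}, which is the point the authors emphasize in the introduction; the present step is exactly where this substitution takes place.
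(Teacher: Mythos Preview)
Your proposal is correct and follows essentially the same approach as the paper's proof: both combine the weighted $L^\infty$ estimate of Lemma~\ref{lem:suph} (with $q=s$, $w=V$) and the reverse H\"older inequality $V\in\mathcal B_\gamma$ to obtain the result in one step, with the ratio $(V)_{B_r}/(V)_{B_{2r}}$ absorbed via the trivial bound you wrote down. The paper compresses this into a single displayed chain of inequalities, while you spell out the intermediate steps (including the measure density bound), but the argument is the same.
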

\begin{proof}
Note that, by \eqref{equivalence}, $V\in A_t$ with $t\ge 1$ and $[V]_t$ depending on $\gamma$ and $b_\gamma$. Then, the desired estimate directly follows from $V\in \mathcal B_\gamma$ and Lemma~\ref{lem:suph} with $q=s$ and $w=V$ as
\[
\left(\frac{1}{|B_r|}\int_{\Omega_{r}}[V|h|^{s}]^\gamma\,dx\right)^{\frac{1}{\gamma}} 
 \le  b_\gamma (V)_{B_{r}}\|h\|_{L^\infty(\Omega_{r})}^{s} \le  \frac{c}{|B_{2r}|}\int_{\Omega_{2r}} V |h|^s\,dx. \qedhere
\]

\end{proof}

\begin{theorem}\label{lem:estimatehomo}
Let $1<p<\infty$, $1<s<p^*$, $\gamma_0$ be from \eqref{gamma0},  $\ba:\R^n\times \R^n\to\R^n$ satisfy \eqref{aas1} and \eqref{aas2}, and $V:\R^n\to [0,\infty)$ do $V\in L^{\gamma_0}(\Omega)$ and   $V\in \mathcal{B}_{\gamma}$ for some   $\gamma$ %$\gamma >1$ 
satisfying  $\tilde\gamma<\gamma<n$, where $\tilde\gamma$ is given in \eqref{gamma}. 
%\begin{equation}\label{gamma1}
% n>\gamma> \max\left\{1,\frac{np}{np-n+p}\right\}=
%\left\{ \begin{array}{cl}
% \frac{np}{np-n+p} & \text{if }\ 1<p<n,\\
% 1& \text{if }\ p\end{equation}
%\ge n.
% \end{array}\right. 
 There exists  a small  $\delta = \delta(n, p, L, \nu,\gamma)>0$ such that the following holds: if $\mathbf{a}$ is $( \delta, R)$-vanishing, $\Omega$ is a $(\delta, R)$-Reifenberg flat domain for some $R>0,$ and $h\in  W^{1,p}(\Omega_{8r}(x_0))$ is a weak solution to
\[
\left\{\begin{array}{rclcl}
-\mathrm{div}\, \mathbf{a}(x,Dh) + V |h|^{s-2}h&  =  &0 & \textrm{ in } & \Omega_{8r}(x_0),  \\
h & = & 0 & \textrm{ on } &  \partial_{\mathrm{w}}\Omega_{8r}(x_0)\ \text{ if }\ B_{8r}(x_0)\not\subset\Omega,\end{array}\right.
\]
where $x_0\in\overline{\Omega}$ and $8r\le R$, then
%$Dh\in L^{\gamma^*(p-1)}(\Omega_r(x_0),\R^n)$  with the estimate
we have 
\begin{equation*}%\label{DwDwVwest}
\left( \mint_{{\Omega}_r(x_0)} |Dh|^{\gamma^* (p-1)}  \, dx \right)^{\frac{p}{\gamma^* (p-1)}} \leq c  \mint_{\Omega_{8r}(x_0)} |Dh|^p\,dx
\end{equation*}
for some $c=c(n,p,\nu,L,s,\gamma,b_\gamma)>0$.

\end{theorem}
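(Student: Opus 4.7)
\medskip

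\noindent\textbf{Proof proposal.} The plan is to rewrite the equation with $f := -V|h|^{s-2}h$ on the right-hand side, apply the Calder\'on--Zygmund estimate of Lemma~\ref{thmDwbdd}, and then absorb the data term using the $L^\infty$ bound from Lemma~\ref{lem:suph}, the reverse H\"older property $V\in\mathcal B_\gamma$, and the energy-type inequality of Lemma~\ref{lem:VhDh}. Since $V\in\mathcal B_\gamma\subset A_t$ for some $t=t(\gamma,b_\gamma)$, $V$ is doubling and satisfies $V(B_{4r})>0$ whenever the integrals below are non-trivial.

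First I would apply Lemma~\ref{thmDwbdd} on $\Omega_{2r}(x_0)$ to the equation $-\mathrm{div}\,\ba(x,Dh)=f$ with $f=-V|h|^{s-2}h$, which yields
\begin{equation*}
\left(\mint_{\Omega_r(x_0)}|Dh|^{\gamma^*(p-1)}\,dx\right)^{\!\frac{p}{\gamma^*(p-1)}}\!\!\le c\mint_{\Omega_{2r}(x_0)}|Dh|^p\,dx+c\left(\mint_{\Omega_{2r}(x_0)}r^\gamma V^\gamma|h|^{\gamma(s-1)}\,dx\right)^{\!\frac{p}{\gamma(p-1)}}\!\!.
\end{equation*}
Only the second term on the right needs work. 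Pulling $\|h\|_{L^\infty(\Omega_{2r}(x_0))}^{s-1}$ out, using the measure density condition \eqref{dencon} to pass from $\mint_{\Omega_{2r}}$ to $\mint_{B_{2r}}$, and invoking the reverse H\"older inequality \eqref{VBqclass} yield
\begin{equation*}
\left(\mint_{\Omega_{2r}(x_0)}V^\gamma|h|^{\gamma(s-1)}\,dx\right)^{\!1/\gamma}\le c\,\|h\|_{L^\infty(\Omega_{2r}(x_0))}^{s-1}(V)_{B_{2r}(x_0)}.
\end{equation*}

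Next I would control $\|h\|_{L^\infty(\Omega_{2r}(x_0))}$ via Lemma~\ref{lem:suph} with $r_1=2r$, $r_2=4r$, exponent $q=s$, and weight $w=V\in A_t$, obtaining
\begin{equation*}
\|h\|_{L^\infty(\Omega_{2r}(x_0))}\le c\left(\frac{1}{(V)_{B_{4r}(x_0)}}\mint_{\Omega_{4r}(x_0)}V|h|^s\,dx\right)^{\!1/s}.
\end{equation*}
Combining this with the previous display and the doubling property $(V)_{B_{2r}(x_0)}\le c(V)_{B_{4r}(x_0)}$, the powers of the averaged weight collapse (the exponent of $(V)_{B_{4r}(x_0)}$ on the right becomes $p/(s(p-1))$), so that
\begin{equation*}
r^{\frac{p}{p-1}}\!\left(\mint_{\Omega_{2r}(x_0)}V^\gamma|h|^{\gamma(s-1)}\,dx\right)^{\!\frac{p}{\gamma(p-1)}}\!\!\le c\,r^{\frac{p}{p-1}}(V)_{B_{4r}(x_0)}^{\frac{p}{s(p-1)}}\!\left(\mint_{\Omega_{4r}(x_0)}V|h|^s\,dx\right)^{\!\frac{p(s-1)}{s(p-1)}}\!\!.
\end{equation*}
Now I would apply Lemma~\ref{lem:VhDh} with the radius $r$ there replaced by $8r$ (so that $r/2$ there equals $4r$ here); the right-hand side of the display above is then dominated by $c\mint_{\Omega_{8r}(x_0)}|Dh|^p\,dx$, and collecting everything completes the proof.

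The routine parts are the scaling bookkeeping and the use of doubling of $V$; the genuinely substantive step is the third paragraph, where we must use Lemma~\ref{lem:suph} with weight $V$ so that the factor $(V)_{B_{4r}}$ exactly compensates the $(V)_{B_{2r}}^{p/(p-1)}$ factor coming from $\mathcal B_\gamma$, leaving a residual power of $(V)_{B_{4r}}$ precisely matched by Lemma~\ref{lem:VhDh}. Without the weighted $L^\infty$--$L^s$ estimate this cancellation would fail and one would be forced to use $\|V\|_{L^\gamma}$, losing the sharpness of the bound. The assumption $\tilde\gamma<\gamma<n$ is needed in order that Lemma~\ref{thmDwbdd} applies and the smallness $\delta=\delta(n,p,L,\nu,\gamma)$ is inherited from that lemma.
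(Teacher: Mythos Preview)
Your proposal is correct and follows essentially the same route as the paper's proof: apply Lemma~\ref{thmDwbdd} with $f=-V|h|^{s-2}h$, pull out $\|h\|_{L^\infty(\Omega_{2r})}$ and use $V\in\mathcal B_\gamma$, then bound the sup-norm via the weighted $L^\infty$--$L^s$ estimate of Lemma~\ref{lem:suph} with $w=V$, and finally close with Lemma~\ref{lem:VhDh} at radius $8r$. One cosmetic remark: the inequality $(V)_{B_{2r}}\le c\,(V)_{B_{4r}}$ you quote is trivial (it follows from $\int_{B_{2r}}V\le\int_{B_{4r}}V$ and $|B_{4r}|=2^n|B_{2r}|$) and does not actually require the doubling of $V$; the paper uses exactly this elementary bound.
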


\begin{proof}
For simplicity, we write $\Omega_\rho=\Omega_\rho(x_0)$ with $\rho>0$. We extend $h$ by $0$ to $B_{16r}\setminus \Omega$ if it is nonempty. From the facts that $h\in L^\infty(B_{2r})$ and  $V\in L^\gamma(\Omega_{2r}),$ we see that $V|h|^{s-2}h\in L^\gamma(\Omega_{2r})$. Therefore, applying Lemma~\ref{thmDwbdd} with $f=V|h|^{s-2}h$, we have
\begin{equation}\label{DwDwrVwes}
\begin{split}
 &\left(\mint_{\Omega_r} |Dh|^{\gamma^*(p-1)} \, dx\right)^{\frac{p}{\gamma^*(p-1)}} \\  
 &\hspace{2cm} \leq c  \mint_{\Omega_{2r}} |Dh|^{p}\, dx+ c \left(   \mint_{\Omega_{2r}} \left[r V|h|^{s-1}  \right]^\gamma \, dx\right)^{\frac{p}{\gamma(p-1)}}.
 \end{split}
\end{equation}
We now estimate the second integral on the right hand side of \eqref{DwDwrVwes}. Since $V\in \mathcal B_\gamma$ and $|\Omega_\rho|\approx |B_\rho|$ with $0<\rho<R$, see \eqref{dencon},
\[
\left(\mint_{\Omega_{2r}}[rV|h|^{s-1}]^\gamma\,dx\right)^{\frac{p}{\gamma(p-1)}} 
\le b_\gamma r^{\frac{p}{p-1}} (V)_{B_{2r}}^{\frac{p}{p-1}}\|h\|_{L^\infty(B_{2r})}^{\frac{p(s-1)}{p-1}}.
\]
Moreover, applying Lemma~\ref{lem:suph}, replacing $(r,r_1,r_2,w)$ by $(4r,2r,4r,V)$, and Lemma~\ref{lem:VhDh}, replacing $r$ with $8r$ 
\[\begin{split}
\left(\mint_{\Omega_{2r}}[rV|h|^{s-1}]^\gamma\,dx\right)^{\frac{p}{\gamma(p-1)}} 
&\le b_{\gamma}  r^{\frac{p}{p-1}} (V)_{B_{2r}}^{\frac{p}{p-1}}\|h\|_{L^\infty(B_{2r})}^{\frac{p(s-1)}{p-1}} \\
&\le c r^{\frac{p}{p-1}} (V)_{B_{4r}}^{\frac{p}{p-1}}\left(\frac{1}{(V)_{B_{4r}}}\mint_{B_{4r}}V|h|^s\,dx \right)^{\frac{p(s-1)}{s(p-1)}}  \\
&\le c r^{\frac{p}{p-1}} (V)_{B_{4r}}^{\frac{p}{s(p-1)}}\left(\mint_{B_{4r}}V|h|^s\,dx \right)^{\frac{p(s-1)}{s(p-1)}}  \\
&\le c \mint_{\Omega_{8r}} |Dh|^{p}\, dx.
\end{split}\]
Plugging the preceding estimate  into \eqref{DwDwrVwes}  we have the  desired estimate.% \eqref{DwDwVwest}.
%We estimate the second integral on the right hand side. Set
%\[
%c_0:= (V|h|^s)_{B_{4r}}^{\frac{s-p}{s}}
%\]
%Note that $(V^{\frac{p}{s}})_{A}^{\frac{s}{p}}\approx (V)_{A}\approx (V)_{B_r}$ for every $B_r\subset A\subset B_{8 r}$. Applying Lemmas~\ref{lem:suph} and \ref{lem:hpestimate} with $k:=\frac{p}{s(p-1)}$,
%\[\begin{split}
%&\left(\frac{1}{|B_{2r}|}\int_{\Omega_{2r}}(rV|h|^{s-1})^\gamma\right)^{\frac{p}{\gamma(p-1)}} 
%\le r^{\frac{p}{p-1}} (V)_{B_{2r}}^{\frac{p}{p-1}}\|h\|_{L^\infty(B_{2r})}^{\frac{s-p}{p-1}+s}\\
%&\le c r^{\frac{p}{p-1}} (V)_{B_{2r}}^{\frac{p}{p-1}}\left(\frac{1}{V(B_{4r})}\int_{B_{4r}}V|h|^s\,dx \right)^{\frac{s-p}{s(p-1)}} \left(\mint_{B_{4r}}|h|^p\,dx \right)^{\frac{s}{p}}  \\
%&\le r^{\frac{p}{p-1}} (V)_{B_{2r}}^{\frac{p}{p-1}-\frac{s-p}{s(p-1)}}c_0^{\frac{1}{p-1}} \frac{\|h\|_{L^\infty(B_{8 r})}^{s}}{(1+c_0r^{p}(V^{\frac{p}{s}})_{B_{r}})^{\frac{sk}{p}}} \\
%&\le c r^{\frac{1}{p-1}-kp} (V)_{B_{2r}}^{\frac{sp-s+p}{s(p-1)}-k}c_0^{\frac{1}{p(p-1)}-k} \left(\frac{1}{V(B_{16 r})}\int_{B_{8 r}} V|h|^s\,dx\right)\\
%&\le c r^{\frac{p}{p-1}-sk} (V)_{B_{2r}}^{\frac{p}{s(p-1)}-k}c_0^{\frac{1}{p-1}-\frac{sk}{p}} \mint_{B_{16 r}} V|h|^s\,dx\\
%&= c \mint_{B_{16 r}} V|h|^s\,dx.
%\end{split}\]
%Plugging the preceding estimate  into \eqref{DwDwrVwes}  we have \eqref{DwDwVwest}.
\end{proof}

\begin{remark}
In the above theorems, it is possible that $\gamma<\gamma_0$. In this case, since $V\in L^{\gamma_0}$, we have $V|h|^s\in L^{\gamma_0}(\Omega_r(x_0))$ in Theorem ~\ref{lem:estimatehomo0} and $Dh\in L^{\gamma_0^*(p-1)}(\Omega_r(x_0),\R^n)$ in Theorem~\ref{lem:estimatehomo}. However, the reverse H\"older type estimates can be obtained with the exponent  $\gamma$, by the assumption that $V\in \mathcal{B}_\gamma$. 
\end{remark}

\section{$L^q$ estimates}
\label{sec gradient estimates}
We are now ready to prove  our main results.
%Theorem~\ref{mainthm}. % and Corollary~\ref{maincor}.
 %As mentioned in the introduction, we use so-called an exit-time argument introduced by Mingione in \cite{AM1,Min1}.  
\subsection{Comparison} We start with the following comparison lemma. 
%which plays a core role in the proof. 

\begin{lemma}\label{lem:approximation}
Let $1<p<\infty$, $1<s<p^*$, $\gamma_0$ be from \eqref{gamma0}, $\ba:\R^n\times \R^n\to\R^n$ satisfy \eqref{aas1} and \eqref{aas2}, $V:\R^n\to [0,\infty)$ do $V\in L^{\gamma_0}(\Omega)$, and $F\in L^p(\Omega,\R^n)$.
 If $u \in  W^{1,p}_0(\Omega)$ is the weak solution to \eqref{maineq}, and  
 $h \in W^{1,p}(\Omega_{8r})$ is the weak solution to
\begin{equation}
\label{lhomoeq}
\left\{\begin{array}{rclcc}
-\mathrm{div}\, \mathbf{a}(x,Dh) + V |h|^{s-2}h&  =  & 0 & \textrm{ in } & \Omega_{8r},  \\
h & = & u & \textrm{ on } & \partial \Omega_{8r},
\end{array}\right.
\end{equation}
where $\Omega_{8r}=\Omega_{8r}(x_0)$ with $x_0=\overline{\Omega}$, then we have the following estimates:
\begin{itemize}
\item[(i)] (Energy estimates)
\begin{equation}\label{lDumDvibdd0}
\int_{\Omega_{8r}}  |Dh|^{p}  \, dx \le c \int_{\Omega_{8r}}  |Du|^{p}  \, dx +c \int_{\Omega_{8r}}  |F|^{p}  \, dx.
\end{equation} 

\item[(ii)] If $p\ge2$,
\begin{equation}\label{lDumDvibdd1}
\int_{\Omega_{8r}}  |Du-Dh|^{p}  \, dx \le c \int_{\Omega_{8r}}  |F|^{p}  \, dx,
\end{equation} 
and for every $\epsilon\in(0,1)$,
\begin{equation}\label{lDumDvibdd11}
\int_{\Omega_{8r}}   V |u-h|^{s} \, dx \le  \epsilon \int_{\Omega_{8r}}   V |u|^{s} \, dx+ c(\epsilon)\int_{\Omega_{8r}}  |F|^{p}  \, dx.
\end{equation} 

\item[(iii)]If $1<p<2$,  for every $\epsilon \in (0,1),$ 
\begin{equation}\label{lDumDvibdd2}
\int_{\Omega_{8r}}  |Du-Dh|^{p}  \, dx
\leq \epsilon \mint_{\Omega_{8r}}  |Du|^{p}  \, dx +c(\epsilon) \int_{\Omega_{8r}}  |F|^{p}  \, dx,
\end{equation}
\begin{equation}\label{lDumDvibdd3}
\int_{\Omega_{8r}}  V |u-h|^{s} \, dx
\leq \epsilon \int_{\Omega_{8r}}  |Du|^{p} + V |u|^{s} \, dx + c(\epsilon) \int_{\Omega_{8r}}  |F|^{p}  \, dx.
\end{equation} 
\end{itemize}
Here, $c>0$ depends on $n,p,L,\nu$ and $s$, and $c(\epsilon)>0$ does on $n,p,L,\nu,s$ and $\epsilon$.
\end{lemma}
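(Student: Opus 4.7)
Since $u = h$ on $\partial\Omega_{8r}$, the difference $u - h$ belongs to $W_0^{1,p}(\Omega_{8r})$ and, extended by zero, to $W_0^{1,p}(\Omega)$, so $\phi = u - h$ is an admissible test function in both \eqref{maineq} and \eqref{lhomoeq}. The plan is to derive the comparison estimates (ii) and (iii) by subtracting the two weak forms, and then to deduce the energy estimate (i) from the triangle inequality $|Dh|^p \le c(|Du|^p + |Du-Dh|^p)$. Subtracting the two tested equations yields the fundamental identity
\begin{align*}
\int_{\Omega_{8r}}[\mathbf{a}(x,Du)-\mathbf{a}(x,Dh)]\cdot(Du-Dh)\,dx &+ \int_{\Omega_{8r}}V[|u|^{s-2}u-|h|^{s-2}h](u-h)\,dx \\
&= \int_{\Omega_{8r}}|F|^{p-2}F\cdot(Du-Dh)\,dx.
\end{align*}
Both terms on the left are nonnegative: by \eqref{mono} the first dominates $c\int(|Du|^2+|Dh|^2)^{(p-2)/2}|Du-Dh|^2$, and monotonicity of $t\mapsto|t|^{s-2}t$ dominates the second by $c\int V|u-h|^s$ when $s\ge 2$ and by $c\int V(|u|+|h|)^{s-2}|u-h|^2$ when $1<s<2$.

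\textbf{Case $p\ge 2$.} The $\mathbf{a}$-monotonicity gives directly $\ge c\int|Du-Dh|^p$, so estimating the right-hand side via Young's inequality as $|F|^{p-1}|Du-Dh|\le \epsilon|Du-Dh|^p + c(\epsilon)|F|^p$ and absorbing yields \eqref{lDumDvibdd1}, together with the inequality $\int V[|u|^{s-2}u-|h|^{s-2}h](u-h)\le c\int|F|^p$. When $s\ge 2$ this already implies $\int V|u-h|^s\le c\int|F|^p$. When $1<s<2$, H\"older's inequality with exponents $2/s$ and $2/(2-s)$ gives
\begin{equation*}
\int V|u-h|^s \le c\Bigl(\int V[|u|^{s-2}u-|h|^{s-2}h](u-h)\Bigr)^{s/2}\Bigl(\int V(|u|+|h|)^s\Bigr)^{(2-s)/2},
\end{equation*}
and bounding $V(|u|+|h|)^s\le c(V|u|^s+V|u-h|^s)$ followed by Young's inequality with small parameter absorbs the $V|u-h|^s$ on the right and produces \eqref{lDumDvibdd11}.

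\textbf{Case $1<p<2$.} Monotonicity now degenerates to $c\int(|Du|+|Dh|)^{p-2}|Du-Dh|^2$, and H\"older's inequality with exponents $2/p$ and $2/(2-p)$ recovers
\begin{equation*}
\int|Du-Dh|^p \le \Bigl(\int(|Du|+|Dh|)^{p-2}|Du-Dh|^2\Bigr)^{p/2}\Bigl(\int(|Du|+|Dh|)^p\Bigr)^{(2-p)/2}.
\end{equation*}
The second factor is dominated by $c(\int|Du|^p+\int|Du-Dh|^p)^{(2-p)/2}$ via triangle inequality; combining with the H\"older bound on the right-hand side of the fundamental identity and applying Young's inequality repeatedly with small parameters absorbs $\int|Du-Dh|^p$ to the left and yields \eqref{lDumDvibdd2}. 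A parallel H\"older splitting of the potential term, together with $V(|u|+|h|)^s\le c(V|u|^s+V|u-h|^s)$ and the same type of absorption, produces \eqref{lDumDvibdd3}. In both cases the energy estimate (i) now follows from the triangle inequality combined with \eqref{lDumDvibdd1} (if $p\ge 2$) or \eqref{lDumDvibdd2} (if $1<p<2$) after choosing $\epsilon$ small enough.

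\textbf{Main obstacle.} The technical crux is the sub-quadratic regime $1<p<2$, where the principal monotonicity degenerates, and simultaneously (if $1<s<2$) so does the potential monotonicity. Passing from the degenerate quantities $(|Du|+|Dh|)^{p-2}|Du-Dh|^2$ and $V(|u|+|h|)^{s-2}|u-h|^2$ back to the natural quantities $|Du-Dh|^p$ and $V|u-h|^s$ requires coupled H\"older--Young applications with carefully chosen exponents and a delicate absorption bookkeeping on both sides. This is precisely why \eqref{lDumDvibdd2} and \eqref{lDumDvibdd3} must be stated in the $\epsilon$-form with loss terms $\epsilon\int|Du|^p$ or $\epsilon\int V|u|^s$ on the right, in contrast with the cleaner super-quadratic bounds of (ii).
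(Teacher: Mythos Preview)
Your argument is correct and rests on the same fundamental identity (test both equations with $u-h$ and subtract) together with the monotonicity bounds \eqref{mono} and \eqref{mono1}, so the core matches the paper. The differences are organizational rather than substantive. First, the paper proves the energy estimate \textup{(i)} \emph{directly} at the outset, by expanding $\int \mathbf a(x,Dh)\cdot Dh$ via the tested identity and growth condition \eqref{aas1} and absorbing with Young; it then \emph{uses} \textup{(i)} inside the proof of \textup{(iii)} to control $\int|Dh|^p$ appearing on the right. You reverse the order: you bound $(|Du|+|Dh|)^p$ by $c(|Du|^p+|Du-Dh|^p)$ via the triangle inequality and absorb, so \textup{(ii)}/\textup{(iii)} are obtained without \textup{(i)}, and \textup{(i)} then drops out as a corollary of $|Dh|^p\le c(|Du|^p+|Du-Dh|^p)$. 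Second, for the degenerate regimes ($1<p<2$ or $1<s<2$) the paper applies a \emph{pointwise} Young splitting of the form $|\xi-\eta|^p\le \kappa(|\xi|^2+|\eta|^2)^{p/2}+c(\kappa)(|\xi|^2+|\eta|^2)^{(p-2)/2}|\xi-\eta|^2$ (and the analogous one for $|u-h|^s$), whereas you use the dual \emph{integral} H\"older splitting with exponents $2/p,\,2/(2-p)$ (resp.\ $2/s,\,2/(2-s)$). Both routes lead to the same absorption structure, and your identification of the sub-quadratic bookkeeping as the main obstacle is exactly right.
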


\begin{proof}
%We first test the equations \eqref{lhomoeq} with the testing function $\varphi=h-u$ in order to discover
%$$
%\int_{\Omega_{4r}} \mathbf{a}(x, Dh)\cdot ( Dh-Du)\, dx + \int_{\Omega_{4r}} V |h|^{s-2}h  \cdot (h-u)\, dx= 0,
%$$
%and then, in view of \eqref{aas1} and \eqref{mono}, we obtain
%$$
%\int_{\Omega_{4r}} |Dh|^p\, dx + \int_{\Omega_{4r}} V |h|^s\, dx\leq c\int_{\Omega_{4r}} |Dh|^{p-1}|Du|\, dx + c \int_{\Omega_{4r}} V |h|^{s-1}|u|\, dx.
%$$
%Therefore, applying Young's inequality, we have \eqref{lem51pf1}.
%
%First we remark that $|\Omega_{8r}|=|B_{8r}|$. 
We test the equations \eqref{maineq} and \eqref{lhomoeq} with the test function $\varphi:=u-h$ in order to discover
\begin{equation}\label{weakformu-v}\begin{split}
&\int_{\Omega_{8r}} \left( \mathbf{a}(x, Du) - \mathbf{a}(x, Dh) \right) \cdot ( Du-Dh)\, dx \\
&+ \int_{\Omega_{8r}} V \left( |u|^{s-2}u - |h|^{s-2}h \right) \cdot (u-h)\, dx= \int_{\Omega_{8r}} |F|^{p-2}F \cdot (Du-Dh)\, dx.
\end{split}\end{equation}
We recall the monotonicity conditions \eqref{mono} and 
\begin{equation}\label{mono1}
\left( |\xi_2|^{s-2}\xi_2 - |\xi_1|^{s-2}\xi_1 \right) \cdot (\xi_2-\xi_1) \geq c(s)  (|\xi_1|^2+|\xi_2|^2)^\frac{s-2}{2} |\xi_2-\xi_1|^2, \quad 
%\ \text{ for all } 
\xi_1,\xi_2\in\R.
\end{equation}
Then we see that the two terms on the left hand side of \eqref{weakformu-v} are nonnegative.

We first prove (i). By \eqref{weakformu-v}, \eqref{mono} and \eqref{aas1}, we infer
\[
\begin{split}
& \int_{\Omega_{8r}}|Dh|^p\,dx \le c \int_{\Omega_{8r}} \ba(x,Dh)\cdot Dh\,dx \\
 &\le c  \int_{\Omega_{8r}} |\mathbf{a}(x, Du)| (|Du|+|Dh|) + |\mathbf{a}(x, Dh)|  |Du| + |F|^{p-1}(|Du|+|Dh|)\, dx\\
  &\le c  \int_{\Omega_{8r}} |Du|^p + |Du|^{p-1}|Dh| + |Dh|^{p-1}  |Du| + |F|^{p-1}(|Du|+|Dh|)\, dx.
\end{split}\]
Therefore, applying Young's inequality, we obtain \eqref{lDumDvibdd0}.
 
We next prove (ii).  Note that in this case the exponent $s$ satisfies either $s < 2$ or $s \ge 2$. Using the monotonicity conditions \eqref{mono} and \eqref{mono1} together with the fact that $2\le p$,  we derive from \eqref{weakformu-v} that 
\[
\int_{\Omega_{8r}} | Du-Dh |^p\, dx + \int_{\Omega_{8r}} V (|u|^2+|h|^2)^{\frac{s-2}{2}}|u-h|^2\, dx \le c  \int_{\Omega_{8r}} |F|^{p-1}|Du-Dh|\, dx,
\] 
which together Young's inequality implies 
\[
\int_{\Omega_{8r}} | Du-Dh |^p\, dx + \int_{\Omega_{8r}} V (|u|^2+|h|^2)^{\frac{s-2}{2}}|u-h|^2\, dx \le c  \int_{\Omega_{8r}} |F|^{p}\, dx.
\] 
Therefore, we obtain \eqref{lDumDvibdd1}. 
On the other hand, by Young's inequality again, we have that for any $\epsilon\in(0,1)$,
\[
\int_{\Omega_{8r}} V |u-h|^s\, dx \le \epsilon \int_{\Omega_{8r}} V |u|^s+V|h|^s\, dx+c(\epsilon)  \int_{\Omega_{8r}} |F|^{p}\, dx,
\]
from which, by choosing $\epsilon$ so small we first have 
\[
\int_{\Omega_{8r}} V |h|^s\, dx \le c \int_{\Omega_{8r}} V |u|^s\, dx+c \int_{\Omega_{8r}} |F|^{p}\, dx.
\]
Inserting this into the previous estimate yields \eqref{lDumDvibdd11}.

Lastly we prove (iii). Applying the monotonicity conditions \eqref{mono} and \eqref{mono1}  and Young's inequality to \eqref{weakformu-v} yields that for any $\kappa_1\in(0,1)$
\[\begin{split}
&\int_{\Omega_{8r}}\left( |Du|^{2}+ |Dh|^2 \right)^{\frac{p-2}{2}}  | Du-Dh|^2\, dx +\int_{\Omega_{8r}}V \left( |u|^{2}+ |h|^2 \right)^{\frac{s-2}{2}}  | u-h|^2\, dx \\
& \leq c \int_{\Omega_{8r}} |F|^{p-1}|Du-Dh|\, dx  \leq \kappa_1 \int_{\Omega_{8r}} |Du-Dh|^p\, dx +c(\kappa_1) \int_{\Omega_{8r}} |F|^{p}\, dx.
\end{split}\]
Since $1<p<2$, then, by Young's inequality again, we have that for any $\kappa_2\in(0,1),$
\[\begin{split}
 |\xi_2-\xi_1|^p &=   |\xi_2-\xi_1|^p \left( |\xi_2|^2 + |\xi_1|^2 \right)^{\frac{p(p-2)}{4}} \left( |\xi_2|^2 + |\xi_1|^2 \right)^{\frac{p(2-p)}{4}}\\
 &\leq \kappa_2 \left( |\xi_2|^2 + |\xi_1|^2 \right)^{\frac{p}{2}} +c(\kappa_2) \left( |\xi_2|^2 + |\xi_1|^2 \right)^{\frac{p-2}{2}}  |\xi_2-\xi_1|^2,
 \end{split}\]
for all $\xi_1,\xi_2\in \R^d$ with $d\in\mathbb{N}.$ Therefore, using the above results we have
\[\begin{split}
\int_{\Omega_{8r}} | Du-Dh|^p\, dx & \leq c \kappa_2 \int_{\Omega_{8r}} |Du|^p+|Dh|^p\,dx\\
&\quad + \kappa_1 c(\kappa_2)\int_{\Omega_{8r}} |Du-Dh|^p\, dx +c(\kappa_1)c(\kappa_2) \int_{\Omega_{8r}} |F|^{p}\, dx,
\end{split}\]
which implies \eqref{lDumDvibdd2} by choosing $\kappa_1$ and $\kappa_2$ such that $c\kappa_2=\frac{\epsilon}{2}$ and $\kappa_1c(\kappa_2)=\frac{1}{2}$ and applying  \eqref{lDumDvibdd0}. In addition, we also have  
\[\begin{split}
\int_{\Omega_{8r}}& | Du-Dh|^p\, dx +\int_{\Omega_{8r}}V | u-h|^s\, dx \\
&\leq c \kappa_2 \int_{\Omega_{8r}} |Du|^p+|Dh|^p+ V|u|^s+V|h|^s\,dx\\
& \qquad + \kappa_1 c(\kappa_2)\int_{\Omega_{8r}} |Du-Dh|^p\, dx +c(\kappa_1)c(\kappa_2) \int_{\Omega_{8r}} |F|^{p}\, dx.
\end{split}\]
%(Here, the exponent $s$ could be larger than $2$.) 
Then we first choose $\kappa_1$ and $\kappa_2$ so small to get   
\begin{equation}\label{V|h|^sbdd}
\int_{\Omega_{8r}} V|h|^s\,dx \le c \int_{\Omega_{8r}}  V|u|^s+|Du|^p+|F|^p\,dx,
\end{equation}
where we used \eqref{lDumDvibdd0}. Inserting this into the previous estimate, we have 
%\[\begin{split}
%\int_{\Omega_{4r}}V | u-h|^s\, dx 
%\le &c\kappa_2  \int_{\Omega_{8r}} |Du|^p+ V|u|^s\,dx\\
%&  + \kappa_1 c(\kappa_2)\int_{\Omega_{8r}} |Du-Dh|^p\, dx +c(\kappa_1)c(\kappa_2) \int_{\Omega_{8r}} |F|^{p}\, dx.
%\end{split}\] 
%{\color{blue}again by applying \eqref{lDumDvibdd0},
\[\begin{split}
\int_{\Omega_{8r}} &| Du-Dh|^p\, dx +\int_{\Omega_{8r}}V | u-h|^s\, dx \\
&\le c\kappa_2  \int_{\Omega_{8r}} |Du|^p+ V|u|^s + |F|^p\,dx\\
& \quad + \kappa_1 c(\kappa_2)\int_{\Omega_{8r}} |Du-Dh|^p\, dx +c(\kappa_1)c(\kappa_2) \int_{\Omega_{8r}} |F|^{p}\, dx\\
%& \le c\kappa_2 \int_{\Omega_{8r}} |Du|^p+ V|u|^s\,dx \\
%&+  \kappa_1 c(\kappa_2)\int_{\Omega_{8r}} |Du|^p\, dx +c \kappa_1 c(\kappa_2)\int_{\Omega_{8r}} |Du|^p\, dx\\
%& +c(\kappa_1,\kappa_2) \int_{\Omega_{8r}} |F|^{p}\, dx\\
%&\le c(\kappa_2+  \kappa_1 c(\kappa_2)) \int_{\Omega_{8r}} |Du|^p+ V|u|^s\,dx+c(\kappa_1,\kappa_2) \int_{\Omega_{8r}} |F|^{p}\, dx.
\end{split}\] 
Therefore, again choosing $\kappa_1$ and $\kappa_2$ so small that 
%$c\kappa_2=\frac{\epsilon}{2}$ 
%and $\kappa_1c(\kappa_2)=\frac{1}{2}$
 $c\kappa_2=\epsilon$ and $\kappa_1 c(\kappa_2)=\frac{1}{2} $,
we obtain \eqref{lDumDvibdd3}.
\end{proof}

\begin{remark} 
In the above lemma, by using \eqref{lDumDvibdd11} and \eqref{V|h|^sbdd}, we also obtain
\begin{equation*}%\label{lDumDvibdd4}
\int_{\Omega_{8r}}  V |h|^{s} \, dx
\leq c \int_{\Omega_{8r}}  |Du|^{p}\chi_{\{1<p<2\}} + V |u|^{s} +  |F|^{p}  \, dx.
\end{equation*}

\end{remark}

\subsection{Proof of Theorem \ref{mainthm}} We prove the theorem by using we the approach introduced by Mingione in \cite{AM1,Min1}. The proof goes in four steps.

\vspace{0.2cm}
\noindent\textit{Step 1. Setting.}

Assume that $\ba:\R^n\times \R^n\to\R^n$ is $(\delta,R)$-vanishing and $\Omega$ is $(\delta,R)$-Reifenberg flat for some $R>0,$ where   $\delta\in(0,1)$  will be chosen sufficiently small later in Step 3 (see Remark~\ref{rmk:deltachoice}). 
% Now, we prove the estimates \eqref{mainest1}-\eqref{mainest3}.
Fix any  $x_0\in\overline{\Omega}$ and $r>0$ satisfying $r \leq \frac{R}{2}$. Assume that $V\in \mathcal B_\gamma$, where the range of $\gamma>1$ is given in (1)--(3) of Theorem~\ref{mainthm}.

We prove the estimates \eqref{mainest1}-\eqref{mainest3} at one time by denoting the function $\Phi(v;x)$ and constant $\gamma_1$ differently as follows:
 
%Now we consider three cases:

\medskip

\noindent %{\bf Case 1. Estimate for $|Du|^p$:}
{\bf Case 1. Estimation of \eqref{mainest1}:} Let $\gamma\in (\tilde\gamma,\infty)$ with $\tilde\gamma$ given in \eqref{gamma}. We fix any 
$q\in(1,\frac{\gamma^*(p-1)}{p})$
%$q\in(\tilde\gamma,\frac{\gamma^*(p-1)}{p})$
 with $\gamma^*$ in \eqref{gamma*}, and denote by  
\[
\Phi(v;x):=|Dv(x)|^{p}
\quad \text{and} \quad 
\gamma_1:=
\left\{ \begin{array}{cl}
 \frac{\gamma^*(p-1)}{p} & \text{if }\ \gamma<n,\\
 \max\{q+1,\left(\frac{3}{2}\right)^*\frac{p-1}{p}\}&  \text{if }\ \gamma \ge n.
 \end{array}\right.
\]
In Case 1, we note that, when $\gamma \ge n$, one can find the constant $\gamma_2\in [\frac{3}{2},n)$ such that $\gamma_1=\frac{\gamma_2^*(p-1)}{p}$. Moreover,  it is clear that $V\in \mathcal B_{\gamma_2}$ since $\gamma_2<n\le \gamma$.

\noindent %{\bf Case 2. Estimate for $V|u|^s$ when $p\ge2$:}
{\bf Case 2. Estimation of \eqref{mainest2}:}  Let $\gamma\in (1,\infty)$. We fix any $q\in(1,\gamma)$, and denote by  
\[
\Phi(v;x):=V(x)|v(x)|^s 
 \quad \text{and} \quad 
 \gamma_1:=\gamma.
\]

\noindent %{\bf Case 3. Estimate for $V|u|^s$ when $1<p<2$:}
{\bf Case 3. Estimation of \eqref{mainest3}}: Let $\gamma\in[\frac{n}{p},\infty)$. We fix any $q\in(1,\gamma)$, and denote by  
\[
\Phi(v;x):=|Dv(x)|^{p}+V|v(x)|^s
\quad \text{and} \quad 
\gamma_1:=\gamma.
\]

\noindent We note that $p\ge 2$  in Case 2 and $1<p< 2$ in Case 3.

\medskip

%
%{\color{blue}
%(뒤에서  Lemma~\ref{lem:estimatehomo} 적용할때,
%\[
%\left( \mint_{{\Omega}_r(x_0)} \Phi^{\gamma_1 }  \, dx \right)^{\frac{1}{\gamma_1 }}  \leq  c  \mint_{\Omega_{8r}(x_0)} \Phi  \,dx. 
%\]
%이렇게 됨. Case 3의 경우는 main theorem 밑에 remark (ii) 와 같이 보면 좋을듯.)
%} 

%For the sake of simplicity, we shall write $\Omega_{\rho}:=\Omega_{\rho}(x_0)$, $\rho>0$, and we also 
With $\Phi(v;x)$ denoted in  above and the weak solution $u$  to \eqref{maineq}, we define
$$
E(\lambda, \rho) := \{x \in \Omega_{\rho} :   \Phi(u;x)>\lambda  \},\quad \lambda>0,
%\quad 
%E_2(\lambda, \rho) := \{x \in \Omega_{\rho} :    V(x)|u(x)|^s>\lambda  \}.
$$
and 
% \begin{equation}\label{lambda0}
%\lambda_0:=  \mint_{\Omega_{2r}} [|Du|^p+ V|u|^s] \,dx+\frac{1}{\delta} \mint_{\Omega_{2r}} |F|^p \,dx ,
%\end{equation}
 \begin{equation}\label{lambda0}
\lambda_0:=  \mint_{\Omega_{2r}} \Phi(u;x) \,dx+\frac{1}{\delta_1} \mint_{\Omega_{2r}} |F|^p \,dx,
\end{equation}
where   $\delta_1\in(0,1)$  will be chosen sufficiently small later in Step 4.

Finally, fix any $\tau_1, \tau_2$ with $1\leq \tau_1 < \tau_2 \leq 2.$ 
%Then we see 
Note that $ \Omega_{r}\subset \Omega_{\tau_1 r } \subset  \Omega_{\tau_2 r } \subset  \Omega_{2 r } $.

\vspace{0.2cm}
\noindent\textit{Step 2. Covering argument.}

%
%{\color{blue}여기서 $E_1$ 이랑 $E_2$에 대해서 따로 하면 $|Du|^p$와 $V|u|^s$에 대해서 각각 구할수 있음. 특히 $|Du|^p$ 에서는 \eqref{lDumDvibdd}, \eqref{DwDwVwest} 쓰고, $V|u|^s$에서는 \eqref{lDumDvibdd}, \eqref{DwDwVwest1} 쓰면 결과가 나올듯. ($|Du|^p+V|u|^s$로 하면 $q\le \gamma$일 때까지 같이 올리고 Step 5 에서 처럼 $|Du|^p$에 대해서 한번더 뭔가를 해야 하는데, 따로 하면 한번에 해결됨~.)
% 
%마지막으로 $p\ge 2$일때 $V|u|^s$ 구하는 경우에  $\lambda_0$ 대신에
%\[
%\lambda_1:=  \mint_{\Omega_{2r}}  V|u|^s \,dx+\frac{1}{\delta} \mint_{\Omega_{2r}} |F|^p \,dx,
%\] 
%\eqref{lDumDvibdd} 대신에 \eqref{lDumDvibdd1} 쓰면  결과가 나올듯.}
%

%{\color{blue}}
We consider $\lambda>0$ large enough so that
\begin{equation}\label{lambdarg}
\lambda > \alpha\, \lambda_0,\ \ \textrm{where } \alpha : = \left( \frac{16}{7}\right)^n\left( \frac{80}{\tau_2 - \tau_1}\right)^n.
\end{equation}
Note that
$ \Omega_{\rho}(y) \subset \Omega_{2r}$ for any $y \in E(\lambda, \tau_1 r)$ and any $ \rho \in \left( 0, (\tau_2-\tau_1)\,r \right].$
Then from the measure density condition \eqref{dencon} and  the definition of $\lambda_0$ given in \eqref{lambda0}, we infer that
\[
\begin{split}
\mint_{\Omega_{\rho}(y)}  \Phi(u;x)
  \,dx+
\frac{1}{\delta_1}\mint_{\Omega_{\rho}(y)} |F|^p \,dx
& \le \frac{|\Omega_{2r}|}{|\Omega_{\rho}(y)|} \lambda_0  \le \left( \frac{16}{7}\right)^{n} \left( \frac{2r}{\rho}\right)^{n}\, \lambda_0\\
& \le \alpha\, \lambda_0  < \lambda,
\end{split}
\]
%\begin{eqnarray*}
%&&\mint_{\Omega_{\rho}(y)}  [|Du|^{p} + V |u|^{s}]  \,dx+
%\frac{1}{\delta}\mint_{\Omega_{\rho}(y)} |F|^p \,dx\\
%&& \leq \frac{|\Omega_{2r}|}{|\Omega_{\rho}(y)|} \lambda_0  \leq \left( \frac{16}{7}\right)^{n} \left( \frac{2r}{\rho}\right)^{n}\, \lambda_0 \leq A\, \lambda_0  < \lambda,
%\end{eqnarray*}
provided that
$$ \frac{(\tau_2 - \tau_1)\,r}{40} \leq \rho \leq (\tau_2-\tau_1)\,r.$$
On the other hand, it follows from Lebesgue's differentiation theorem that for almost every $y \in E(\lambda, \tau_1 r),$
$$ \lim_{\rho \rightarrow 0} \bigg(\mint_{\Omega_{\rho}(y)}\Phi(u;x) \,dx +\frac{1}{\delta_1} 
 \mint_{\Omega_{\rho}(y)} |F|^p \,dx\bigg) > \lambda.$$

Therefore the above inequalities and 
the continuity of the integral with respect to the measure of the domain yield that for almost every $y \in E(\lambda, \tau_1 r),$ there exists
$$ \rho_{y} = \rho(y) \in \bigg( 0, \frac{(\tau_2-\tau_1)\,r}{40}\bigg)$$
such that
$$ \mint_{\Omega_{\rho_y}(y)} \Phi(u;x)   \,dx +\frac{1}{\delta_1}
 \mint_{\Omega_{\rho_y}(y)} |F|^p \,dx = \lambda,$$
and for any $ \rho \in ( \rho_y, (\tau_2-\tau_1)r]$ there holds
$$ \mint_{\Omega_{\rho}(y)} \Phi(u;x)   \,dx +\frac{1}{\delta_1}  \mint_{\Omega_{\rho}(y)} |F|^p \,dx <\lambda.
 $$

 As a consequence, Vitali's covering theorem implies the following:
\begin{lemma}\label{coveringlem}
Given $\lambda > \alpha\, \lambda_0,$ there exists a disjoint family of $\{ \Omega_{\rho_i}(y^i)\}_{i=1}^{\infty}$ with $y^i \in E(\lambda, \tau_1 r)$ and $\rho_{i} \in \left(0, \frac{(\tau_2-\tau_1)\,r}{40} \right)$ such that
$$E(\lambda, \tau_1 r) \subset \bigcup_{i=1}^{\infty} \Omega_{5\rho_i}(y^i), $$
\begin{equation}\label{covering12}
\mint_{\Omega_{\rho_i}(y^i)} \Phi(u;x) \,dx +
\frac{1}{\delta_1} \mint_{\Omega_{\rho_i}(y^i)} |F|^p \,dx = \lambda,
\end{equation}
and  for any $ \rho \in ( \rho_i, (\tau_2-\tau_1)\,r]$,
\begin{equation}\label{covering2}
\mint_{\Omega_{\rho}(y^i)}\Phi(u;x)  \,dx+ \frac{1}{\delta_1} \mint_{\Omega_{\rho}(y^i)} |F|^p \,dx<\lambda.
\end{equation}
\end{lemma}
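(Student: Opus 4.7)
The proof will be a textbook stopping-time and Vitali-covering construction, and the paragraphs immediately before the lemma have already supplied the two nontrivial ingredients: the strict upper bound on the combined averages at scales $\rho \in [(\tau_2-\tau_1)r/40,(\tau_2-\tau_1)r]$ coming from the measure density \eqref{dencon} together with the assumption $\lambda > \alpha\lambda_0$, and the strict lower bound as $\rho \to 0^+$ coming from the Lebesgue differentiation theorem applied to the locally integrable function $\Phi(u;\cdot)+\tfrac{1}{\delta_1}|F|^p$. My plan therefore reduces to three short steps.

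First, for each $y$ in the measure-one subset of $E(\lambda,\tau_1 r)$ at which Lebesgue differentiation holds, I define the exit time
\[
\rho_y := \sup\Bigl\{\rho \in (0,(\tau_2-\tau_1)r] : \mint_{\Omega_\rho(y)}\Phi(u;x)\,dx + \frac{1}{\delta_1}\mint_{\Omega_\rho(y)}|F|^p\,dx \geq \lambda\Bigr\}.
\]
The upper-scale bound forces $\rho_y < (\tau_2-\tau_1)r/40$, the lower-scale bound makes the set over which the supremum is taken nonempty, and continuity of both averages in $\rho$ (resting only on absolute continuity of the Lebesgue integral, together with $|\Omega_\rho(y)|>0$ for $y \in \overline\Omega$ and $\rho>0$, which is ensured by \eqref{dencon}) yields simultaneously the equality \eqref{covering12} at $\rho=\rho_y$ and the strict inequality \eqref{covering2} for every $\rho \in (\rho_y,(\tau_2-\tau_1)r]$.

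Second, I will feed the family $\{\Omega_{\rho_y}(y)\}_y$ into Vitali's $5r$-covering lemma. Since the radii are uniformly bounded by $(\tau_2-\tau_1)r/40$, Vitali produces a countable pairwise-disjoint subfamily $\{\Omega_{\rho_i}(y^i)\}_{i=1}^\infty$ whose fivefold dilations $\{\Omega_{5\rho_i}(y^i)\}$ cover $E(\lambda,\tau_1 r)$ up to a Lebesgue-null exceptional set (the set of non-Lebesgue points excluded in Step one). This null set is invisible to the level-set integrals that this lemma feeds into in the proof of Theorem \ref{mainthm}, which is why the inclusion can be written as stated; the two quantitative identities \eqref{covering12}--\eqref{covering2} are then inherited verbatim from Step one.

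I do not foresee any genuine obstacle, since every analytic input has already been built. The only subtlety worth flagging is the handling of the Lebesgue-null exceptional set, which is standard in Calder\'on--Zygmund level-set decompositions; no assumption on $\ba$, $V$, or $\Omega$ beyond the measure density \eqref{dencon} (for continuity of $\rho \mapsto |\Omega_\rho(y)|$) is used in the covering itself, so the small parameter $\delta$ plays no role here and is fixed later in the proof of the theorem.
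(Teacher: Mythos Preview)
Your proposal is correct and matches the paper's approach exactly: the paragraphs preceding the lemma already construct the exit radius $\rho_y$ via continuity between the Lebesgue-point lower bound and the large-scale upper bound, and the paper then simply says ``As a consequence, Vitali's covering theorem implies the following'' to obtain the disjoint subfamily with the stated covering and stopping-time properties. Your treatment of the Lebesgue-null exceptional set is slightly more explicit than the paper's, but the argument is otherwise identical.
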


Furthermore, according to Lemma~\ref{coveringlem} we infer
\begin{equation*}
\begin{split}
 & \left|\Omega_{\rho_i}(y^i)\right| =  \frac{1}{\lambda} \bigg( \int_{\Omega_{\rho_i}(y^i)} \Phi(u;x)  \,dx +
\frac{1}{\delta_1} \int_{\Omega_{\rho_i}(y^i)} |F|^p \,dx \bigg)\\  
 & \leq \frac{1}{\lambda} \bigg( \int_{\Omega_{\rho_i}(y^i)\cap\{\Phi(u;x) >\frac{\lambda}{4} \}} \Phi(u;x) \,dx+ \frac1{\delta_1} \int_{\Omega_{\rho_i}(y^i)\cap\{|F|^p>\frac{\delta_1\lambda}{4} \}} |F|^p \,dx+\frac{\lambda}{2} \left|\Omega_{\rho_i}(y^i)\right|\bigg)\\
 & =\frac{1}{2} \left|\Omega_{\rho_i}(y^i)\right|  + \frac{1}{\lambda}  \bigg( \int_{\Omega_{\rho_i}(y^i)\cap\{ \Phi(u;x)  >\frac{\lambda}{4} \}}  \Phi(u;x)  \,dx+ \frac1{\delta_1} \int_{\Omega_{\rho_i}(y^i)\cap\{|F|^p>\frac{\delta_1\lambda}{4} \}} |F|^p \,dx\bigg)
 \end{split}
\end{equation*}
and so
\begin{equation}\label{omegai}
\begin{split}
 \left|\Omega_{\rho_i}(y^i)\right|\leq  \frac{2}{\lambda} \bigg( \int_{\Omega_{\rho_i}(y^i)\cap\{ \Phi(u;x) >\frac{\lambda}{4} \}} \Phi(u;x)   \,dx+ \frac1{\delta_1} \int_{\Omega_{\rho_i}(y^i)\cap\{|F|^p>\frac{\delta_1\lambda}{4} \}} |F|^p \,dx\bigg).
 \end{split}
\end{equation}

%%%%%%%%%%%%%%%%%%%%%%%%%%%%%%%
\vspace{0.2cm}
\noindent\textit{Step 3. Comparison estimates.}
%
%{\color{blue}
%\noindent {\bf Case 1. Estimate for $|Du|^p$ with $p\ge2$:} 
%
% Let $\gamma\in (\tilde\gamma,\infty)$ with $\tilde\gamma$ in \eqref{gamma} and  $p\ge 2.$
% We define  
%$
%\Phi(x):=|Du(x)|^{p}.
%$
%}

We note from \eqref{covering2} in Lemma~\ref{coveringlem} that
\begin{equation*}%\label{phiFl<lam}
\mint_{\Omega_{40\rho_i}(y^i)} \Phi(u;x)   \,dx+ \frac{1}{\delta_1} \mint_{\Omega_{40\rho_i}(y^i)}|F|^p \,dx<\lambda.
\end{equation*}
%Applying Lemma~\ref{lem:approximation} and Theorems~\ref{lem:estimatehomo0} and \ref{lem:estimatehomo}, we have that for any $\epsilon \in (0,1),$  there exists a small $\delta= \delta( \epsilon, n, p,   L, \nu, s, \gamma, b_\gamma) \in (0,\delta_1)$ such that
Applying Lemma~\ref{lem:approximation}, we have that for any $\epsilon \in (0,1),$  there exists a small $\delta_1= \delta_1( \epsilon, n, p,   L, \nu, s) \in (0,1)$ such that
\begin{equation}\label{tPhimPhibdd}
\begin{split}
\mint_{\Omega_{40\rho_i}(y^i)}  \Phi(u-h_i;x)\,dx 
&\le \epsilon \mint_{\Omega_{40\rho_i}(y^i)} \Phi(u;x) \, dx+c(\epsilon)  \mint_{\Omega_{40\rho_i}(y^i)} |F|^{p}  \, dx\\
& \le \epsilon \lambda + c(\epsilon) \delta_1 \lambda \le 2 \epsilon \lambda.
\end{split}
\end{equation} 
(In fact, when $p\ge 2$ in  {\bf Case 1}, 
%and  {\bf Case 2}, we remark that 
$c(\epsilon)$  in \eqref{tPhimPhibdd} does not depend on $\epsilon$.)  
Furthermore, recalling the definition of $\gamma_1$ in {\bf Cases 1--3} and applying  %from \eqref{lem51pf1} 
Theorems~\ref{lem:estimatehomo0} and \ref{lem:estimatehomo} with $\gamma=\gamma_1$, we have 
\begin{equation}\label{tPhibdd}
\begin{split}
\bigg( \mint_{\Omega_{5\rho_i}(y^i)} \Phi(h_i;x)^{\gamma_1}  \, dx \bigg)^{\frac1{\gamma_1}} &\le c \mint_{\Omega_{40\rho_i}(y^i)} \Phi(h_i;x)\,dx\\
& \le c  \mint_{\Omega_{40\rho_i}(y^i)} \Phi(u;x)+  |F|^p \,dx   \le c \lambda,
\end{split}
\end{equation}
where $h_i \in W^{1,p}(\Omega_{20\rho_i}(y^i))$ is the unique weak solution to
\begin{equation*}%\label{pb:homoeq}
\left\{\begin{array}{rclcc}
-\mathrm{div}\, \mathbf{a}(x,Dh_i) + V |h_i|^{s-2}h_i&  =  & 0 & \textrm{ in } & \Omega_{40\rho_i}(y^i),  \\
h_i & = & u & \textrm{ on } & \partial \Omega_{40\rho_i}(y^i).
\end{array}\right.
\end{equation*}

\begin{remark}\label{rmk:deltachoice}
At this stage, the constant $\delta$ is fixed as the one in Theorem~\ref{lem:estimatehomo} with $\gamma=\gamma_1$. Therefore, $\delta$ depends on $n,p,L,\nu,\gamma$ when $\gamma<n$ or $n,p,L,\nu,q$ when $\gamma \ge n$. Moreover, in Case 2, 
Theorem~\ref{lem:estimatehomo} is not used, but only  Theorem~\ref{lem:estimatehomo0} is.
Hence  the $(\delta,R)$-vanishing smallness assumptions on $\ba$ and $\Omega$ are not needed in this case. 
\end{remark}

Let $y \in \Omega_{5\rho_i}(y^i)$ such that $\Phi(u;y)> K\lambda,$ where constant $K\geq 1$ will be chosen later.
We then note that
$$
\Phi(u;y) \le 2^{\max\{p,s\}-1} [\Phi(u-h_i;y)+\Phi(h_i;y)].
$$
Here, we need to consider the two cases:
\[
\textrm{(i)}\, \Phi(h_i;y)  \le \Phi(u-h_i;y), \quad 
\textrm{(ii)}\, \Phi(h_i;y)  > \Phi(u-h_i;y).
\]
For the case (i), it is clear that
$$
\Phi(u;y) \leq 2^{\max\{p,s\}} \Phi(u-h_i;y).
$$
For the case (ii), we see that
$$ K\lambda  <\Phi(u;y)  \le 2^{\max\{p,s\}} \Phi(h_i;y) ,$$
from which, it follows that
$$
\Phi(u;y)  \le 2^{\max\{p,s\}} \Phi(h_i;y) \left[ \frac{2^{\max\{p,s\}}\Phi(h_i;y)}{K\lambda}   \right]^{\gamma_1-1}  = \frac{2^{\gamma_1\max\{p,s\}}}{(K \lambda)^{\gamma_1-1}} \Phi(h_i;y)^{\gamma_1} .
$$
 For the both cases (i) and (ii), we finally obtain that
\[
\begin{split}
\Phi(u;y)&\le   2^{\max\{p,s\}}\Phi(u-h_i;y)+  \frac{2^{\gamma_1\max\{p,s\}}}{(K \lambda)^{\gamma_1-1}}\Phi(h_i;y)^{\gamma_1}
\end{split}
\]
for any $y \in \Omega_{5\rho_i}(y^i)$ such that $\Phi(u;y)> K\lambda.$

Then we apply \eqref{tPhimPhibdd} and \eqref{tPhibdd} to discover
\[
\begin{split}
& \int_{\Omega_{5\rho_i}(y^i) \cap E(K\lambda , \tau_2 r)} \Phi(u;x)\, dx\\
 &\le c \int_{\Omega_{5\rho_i}(y^i)}  \Phi(u-h_i;x) \,dx + \frac{c}{(K \lambda)^{\gamma_1-1}} \int_{\Omega_{5\rho_i}(y^i)} \Phi(h_i;x)^{\gamma_1}  \, dx\\
 & \le  c \left(\epsilon \lambda  +  \frac{\lambda^{\gamma_1}}{(K \lambda)^{\gamma_1-1}}\right) \left| \Omega_{5\rho_i}(y^i)\right| \\
 & \le c \lambda \left(\epsilon   + K^{1-\gamma_1}\right) \left| \Omega_{\rho_i}(y^i)\right|  = c\tilde\epsilon \lambda\left| \Omega_{\rho_i}(y^i)\right|
 \end{split}
\]
for some constant $c=c(n,p,  L, \nu,s,\gamma, b_\gamma)>0,$ where
\begin{equation}\label{tepsilon}
\tilde \epsilon:=   \epsilon + K^{1-\gamma_1}.
\end{equation}
Inserting \eqref{omegai} into the previous estimate, we conclude that
\[
\begin{split}
 &\int_{\Omega_{5\rho_i}(y^i) \cap E(K\lambda , \tau_2 r)}\Phi(u;x)\, dx\\
  & \le c\tilde \epsilon \bigg( \int_{\Omega_{\rho_i}(y^i)\cap\left\{ \Phi(u;x) >\frac{\lambda}{4} \right\}} \Phi(u;x)   \,dx+ \frac1{\delta_1} \int_{\Omega_{\rho_i}(y^i)\cap\left\{|F|^p>\frac{\delta_1\lambda}{4} \right\}} |F|^p \,dx\bigg).
 \end{split}
\]

From Lemma~\ref{coveringlem}, we note that $\Omega_{\rho_i}(y^i)$ is mutually disjoint and
$$
E(K\lambda, \tau_1 r) \subset E(\lambda, \tau_1 r )\subset \bigcup_{i=1}^{\infty} \Omega_{5\rho_i}(y^i) \subset \Omega_{\tau_2 r} ,
$$
since $K \geq 1.$ Then  we obtain that
\begin{equation}\label{EDubddcal}
\begin{split}
 &\int_{ E(K\lambda, \tau_1 r)}\Phi(u;x)\, dx  \le   \sum_{i=1}^{\infty}  \int_{\Omega_{5\rho_i}(y^i) \cap E(K\lambda, \tau_1 r)} \Phi(u;x)\, dx \\
 & \le  c\tilde \epsilon \bigg( \int_{\Omega_{\tau_2r}\cap\left\{ \Phi(u;x)>\frac{\lambda}{4} \right\}} \Phi(u;x) \,dx + \frac1{\delta_1} \int_{\Omega_{\tau_2r} \cap\left\{|F|^p>\frac{\delta_1\lambda}{4} \right\}} |F|^p \,dx\bigg)
 \end{split}
\end{equation}
for some constant  $c=c(n,p, \gamma, L, \nu, b_\gamma)>0.$

\vspace{0.2cm}
\noindent \textit{Step 4. Proof of \eqref{mainest1}--\eqref{mainest3}.} %when $q\in(p,p\gamma)$

We are now ready to conclude the proof via  Fubini's theorem with a truncation argument. For $k >0$,
%$k \geq \lambda,$ 
let us define
$$
\Phi_k(u;x) := \min\left\{\Phi(u;x), k \right\},
$$
and consider the upper level set with respect to $\Phi_k $ as
$$ E_k ( \tilde \lambda, \rho):= \left\{ y \in \Omega_{\rho} :  \Phi_k(u;y)> \tilde \lambda\right\}\ \ \text{for }\tilde \lambda,\rho>0. 
$$
Then since $E_k ( \tilde\lambda, \rho)=\emptyset$ when $k\le \tilde\lambda$ and $E_k ( \tilde\lambda, \rho)= E (\tilde\lambda, \rho)$ when $k> \tilde\lambda,$
%Then since $E_k ( K\lambda, \tau_1 r) \subset E ( K\lambda, \tau_1 r)$ and {\color{blue}
%$
%\left\{ \Phi_k(u;\cdot) >\frac{\lambda}{4} \right\} = \left\{ \Phi(u;\cdot)>\frac{\lambda}{4} \right\}$ when $k\ge \lambda,$}
it follows from \eqref{EDubddcal} that
\begin{equation*}%\label{E_kDubddcal}
\begin{split}
 \int_{ E_k(K\lambda, \tau_1 r)}\Phi(u;x)\, dx   &\le  c\tilde \epsilon \bigg(  \int_{E_k\left(\frac{\lambda}{4},\tau_2r\right)}\Phi(u;x)\,dx + \frac1{\delta_1} \int_{\Omega_{\tau_2r} \cap\left\{|F|^p>\frac{\delta_1\lambda}{4} \right\}} |F|^p \,dx\bigg).
 \end{split}
\end{equation*}
By multiplying both sides by $\lambda^{q-2}$ and integrating with respect to $\lambda$ over $(\alpha\lambda_0, \infty)$, we then have that
\begin{equation}\label{lamPhiEk}
\begin{split}
 &I_0 :=\int_{\alpha\lambda_0}^\infty \lambda^{q-2}\int_{ E_k(K\lambda, \tau_1 r)} \Phi(u;x) \, dxd\lambda\\
 & \leq  c\tilde \epsilon  \bigg(  \int_{\alpha\lambda_0}^\infty \lambda^{q-2} \int_{E_k\left(\frac{\lambda}{4},\tau_2r\right)} \Phi(u;x) \,dxd\lambda +\int_{\alpha\lambda_0}^\infty \lambda^{q-2}  \int_{\Omega_{\tau_2r}\cap\left\{\frac{|F|^p}{\delta_1}>\frac{\lambda}{4} \right\}} \frac{|F|^p}{\delta_1} \,dxd\lambda\bigg)\\
&=: c\tilde\epsilon (I_1+I_2).
 \end{split}
\end{equation}
Here, by virtue of Fubini's theorem, we derive that
\begin{equation*}
\begin{split}
I_0 &= \int_{E_k(K\alpha\lambda_0, \tau_1 r)}
\Phi(u;x)  \bigg( \int_{\alpha\lambda_0}^{\Phi_k(u;x)/K}  \lambda^{q-2} \, d\lambda \bigg) dx\\
&= \frac{1}{q-1}\, \Bigg\{  \int_{E_k(K\alpha\lambda_0, \tau_1 r)}
\Phi(u;x)  \left[\frac{\Phi_k(u;x)}{K}\right]^{q-1}  \,  dx \\
& \qquad\qquad\qquad -(\alpha\lambda_0)^{q-1}\int_{E_k(K\alpha\lambda_0, \tau_1 r)} 
\Phi(u;x) \,dx \Bigg\},
 \end{split}
\end{equation*}
and
\begin{equation*}
\begin{split}
I_1&=     \int_{E_k\left(\frac{\alpha\lambda_0}{4},\tau_2r\right)} \Phi(u;x) \bigg(\int_{\alpha\lambda_0}^{ 4\Phi_k(u;x)}  \lambda^{q-2} \,d\lambda\bigg) dx \\
&\leq \frac{1}{q-1}  \int_{E_k\left(\frac{\alpha\lambda_0}{4},\tau_2r\right)} \Phi(u;x) \left[4\Phi_k(u;x)\right]^{q-1}\, dx \\
&\leq   \frac{4^{q-1}}{q-1}   \int_{\Omega_{\tau_2 r}}  \Phi(u;x)  \Phi_k(u;x)^{q-1} \, dx.
 \end{split}
\end{equation*}
Similarly, we obtain that
\begin{equation*}
\begin{split}
I_2&= \int_{\Omega_{\tau_2r}\cap\left\{\frac{|F|^p}{\delta_1}>\frac{\alpha\lambda_0}{4} \right\}}  \frac{|F|^p}{\delta_1}  \int_{\alpha\lambda_0}^{4|F|^p/\delta_1} \lambda^{q-2}  \,d\lambda dx\\
&\leq \frac{1}{q-1} \int_{\Omega_{\tau_2r}\cap\left\{\frac{|F|^p}{\delta_1}>\frac{\alpha\lambda_0}{4} \right\}}  \frac{|F|^p}{\delta_1}   \left[\frac{4|F|^p}{\delta_1}\right]^{q-1}\, dx \\
&\leq   \frac{4^{q-1}}{q-1}   \int_{\Omega_{\tau_2 r}} \left[  \frac{|F|^{p}}{\delta_1}\right]^q \, dx.
 \end{split}
\end{equation*}
Therefore we insert the previous estimates for $I_0$, $I_1$, $I_2$ into \eqref{lamPhiEk} to discover
\[
\begin{split}
&\int_{E_k(K\alpha\lambda_0, \tau_1 r)} \Phi(u;x)  \Phi_k(u;x)^{q-1}  \,  dx \\
 &  \leq  (K\alpha\lambda_0)^{q-1}\int_{\Omega_{\tau_1 r}} \Phi(u;x) \,dx
\\
&\quad +c\tilde \epsilon K^{q-1} \bigg( \int_{\Omega_{\tau_2 r}}  \Phi(u;x)\Phi_k(u;x)^{q-1} \, dx + \int_{\Omega_{\tau_2 r}}   \left[\frac{|F|^p}{\delta_1} \right]^{q} \, dx \bigg).
\end{split}
\]
We also note that
$$
\int_{\Omega_{\tau_1 r} \setminus E_k(K\alpha\lambda_0, \tau_1 r)} \Phi(u;x)\Phi_k(u;x)^{q-1}\,  dx  \leq  (K\alpha\lambda_0)^{q-1} \int_{\Omega_{\tau_1 r}} \Phi(u;x) \, dx.
$$
In turn, by the last two estimates we obtain
\[
\begin{split}
&\int_{\Omega_{\tau_1 r}}\Phi(u;x)\Phi_k(u;x)^{q-1}  \,  dx \\& \le (K\alpha\lambda_0)^{q-1}\int_{\Omega_{\tau_1 r}}\Phi(u;x)\,dx\\
&\quad + c_2\tilde \epsilon K^{q-1} \bigg( \int_{\Omega_{\tau_2 r}}  \Phi(u;x) \Phi_k(u;x)^{q-1} \, dx + \int_{\Omega_{\tau_2 r}}   \left[\frac{|F|^p}{\delta_1} \right]^{q} \, dx \bigg)
\end{split}
\]
for some $c_2=c_2(n,p,L,\nu,s,\gamma,b_\gamma,q)>0$. At this stage, we recall the definition of $\tilde \epsilon$ given in \eqref{tepsilon}, and then take large $K>1$ and small $\epsilon\in(0,1)$ depending on  $n,p,L,\nu,s,\gamma,b_\gamma,q$ such that
\[
K\geq (4c_2)^{\frac{1}{\gamma_1-q}} \ \ \ \text{and}\ \ \ \epsilon\leq\frac{1}{4c_2K^{q-1}},
\]
hence $\delta_1=\delta_1(n,p,L,\nu,\gamma,b_\gamma,q)\in(0,1)$ is finally determined.  Recalling the definition of $\alpha$ in \eqref{lambdarg} we  consequently obtain
\[
\begin{split}
&\int_{\Omega_{\tau_1 r}}\Phi(u;x) \Phi_k(u;x)^{q-1}  \,  dx\\
&  \le \frac12 \int_{\Omega_{\tau_2 r}}  \Phi(u;x)\Phi_k(u;x)^{q-1} \, dx +\frac{c\lambda_0^{q-1}}{(\tau_2-\tau_1)^{n}}\int_{\Omega_{2r}}\Phi(u;x)\,dx+ c \int_{\Omega_{2 r}}   |F|^{pq} \, dx .
\end{split}
\]
Then we apply Lemma~\ref{teclem} to discover
$$
\int_{\Omega_{r}}\Phi(u;x) \Phi_k(u;x)^{q-1}  \,  dx  \leq c\lambda_0^{q-1}\int_{\Omega_{2r}}\Phi(u;x)\,dx+ c \int_{\Omega_{2 r}}   |F|^{pq} \, dx
$$
for any $k > 0$. Finally, by virtue of Lebesgue's monotone convergence theorem, H\"older's inequality and Young's inequality together with the definition of $\lambda_0$ in  \eqref{lambda0}, we derive that
\begin{eqnarray*}
\mint_{\Omega_{r}}\Phi(u;x)^q\,dx &= &\lim_{k\to\infty} \mint_{\Omega_{r}}\Phi(u;x) \Phi_k(u;x)^{q-1}  \,  dx\\
& \leq& c\lambda_0^{q-1}\mint_{\Omega_{2r}}\Phi(u;x)\,dx+ c \mint_{\Omega_{2 r}}   |F|^{pq} \, dx\\
& \leq& c\bigg(\mint_{\Omega_{2r}}\Phi(u;x)\,dx\bigg)^{q}+ c \mint_{\Omega_{2 r}}   |F|^{pq} \, dx.
\end{eqnarray*}
This implies  the desired estimates \eqref{mainest1}--\eqref{mainest3}, by
 recalling the definition of $\Phi(u;x)$ in {\bf Cases 1--3}.
%\section*{Acknowledgements}

\bibliographystyle{amsplain}

\end{document}